\documentclass[11pt,centertags,reqno,twoside]{amsart}
\usepackage{amsmath,latexsym, graphicx}
\usepackage[psamsfonts]{amssymb}
\usepackage{mathptm}
\usepackage{color}
\usepackage{amsmath}
\usepackage{amsfonts}
\usepackage{amssymb}
\usepackage{amsthm}
\usepackage{lineno}
\usepackage[mathcal]{euscript}
\usepackage{bm}
\usepackage[english]{babel}
\usepackage[bookmarks]{hyperref}
\usepackage{ulem}
\usepackage{caption}
\usepackage{subcaption}
\numberwithin{equation}{section}

\oddsidemargin 10mm \evensidemargin 10mm
\addtolength{\textwidth}{21mm} \addtolength{\textheight}{10mm}

\frenchspacing


\renewcommand{\epsilon}{\varepsilon}


\renewcommand{\epsilon}{\varepsilon}

\renewcommand{\hat}{\widehat }


\newcommand{\black}{\color{black}}


\newcommand{\be}{\begin{equation}}
\newcommand{\ee}{\end{equation}}










\renewcommand{\det}{\mathop{\mathrm{det}}}

{\bf}{\it}
\newtheorem{theorem}{Theorem}[section]
\newtheorem{lemma}[theorem]{Lemma}
\newtheorem{corollary}[theorem]{Corollary}

\newtheorem{definition}[theorem]{Definition}
\newtheorem{proposition}[theorem]{Proposition}

\newtheorem{remark}[theorem]{Remark}



\date{\today}

\begin{document}

\author{Saidakhmat N. Lakaev, Saidakbar S.~Abduvayitov, Shuhrat S.~Lakaev}

\address[Saidakhmat Lakaev]{Samarkand State University, 140104, Samarkand, Uzbekistan}
\email{slakaev2019@gmail.com}

\address[Saidakbar Abduvayitov]{Samarkand State University, 140104, Samarkand, Uzbekistan}
\email{saidakbar1911@gmail.com}

\address[Shuhrat Lakaev]{National University of Uzbekistan named after Mirzo Ulugbek, 100123, Tashkent, Uzbekistan}
\email{shlakaev@mail.ru}

\large
\title[Threshold Resonances, Critical Couplings, and Eigenvalue Bounds ...]{Threshold Resonances, Critical Couplings, and Eigenvalue Bounds for Two-Particle Operators on $\mathbb{Z}^3$}

\maketitle

\begin{abstract}
We study a family of lattice Schr\"odinger operators $H_{\mu_1\mu_2}(K)$ describing two identical bosons on the three-dimensional cubic lattice $\mathbb{Z}^3$, where $K \in \mathbb{T}^3$ is the quasi-momentum, and $\mu_1, \mu_2 \in \mathbb{R}$ are coupling constants corresponding to on-site and nearest-neighbour interactions, respectively.

We show that the Hilbert space $L^{2,\mathrm{e}}(\mathbb{T}^3)$ decomposes into three mutually orthogonal subspaces, each invariant under $H_{\mu_1\mu_2}(0)$. A detailed spectral analysis of the restriction of $H_{\mu_1\mu_2}(0)$ to one of these subspaces reveals two smooth critical curves in the $(\mu_1, \mu_2)$-plane, separating regions where the number of eigenvalues below the essential spectrum remains constant. For the restrictions to the other two subspaces, we identify a critical point on the $\mu_2$-axis that partitions it into intervals with a constant number of eigenvalues below the essential spectrum.

Analogously, two additional critical curves and one critical point determine regions and intervals where the number of eigenvalues above the essential spectrum is constant. In particular, for suitable parameter values, $H_{\mu_1\mu_2}(0)$ may possess up to three bound states located either below or above the essential spectrum, with numbers $(\alpha,\beta)$ satisfying $\alpha + \beta < 3$ or $(\alpha,\beta) \in \{(3,0),(0,3)\}$.

Here, by \emph{eigenvalue bounds} we mean both the possible locations of eigenvalues outside the essential spectrum and the maximum number of such eigenvalues for given coupling parameters. Finally, we extend the analysis to arbitrary quasi-momentum $K \in \mathbb{T}^3$, obtaining general lower bounds for the number of eigenvalues of $H_{\mu_1\mu_2}(K)$.
\end{abstract}

\noindent \textbf{Keywords:} Lattice Schr\"{o}dinger operator, discrete spectrum, critical operator, essential spectrum, threshold phenomena, two-particle systems.

\noindent \textbf{Mathematics Subject Classification (2020):} 81Q10, 81Q35, 47A10, 47B37.

\section{Introduction}
Lattice models constitute an important class of operators in mathematical physics, offering a rigorous framework for analyzing quantum systems evolving in discrete media. In this context, few-body Hamiltonians \cite{Mattis:1986} serve as the simplest version of the Bose--Hubbard model  and describe the motion of a finite number of identical particles subject to short-range interactions. Over the past several decades, such operators have been widely studied \cite{ALzM:2004,ALMM:2006,ALzM:2007,BdsPL:2017,FICarroll:2002,HMumK:2020,LO'zdemir:2016,LKhKh:2021,Motovilov:2001,KhLAlmuratov:2022} both for their relevance in condensed-matter physics and for their role as mathematically accessible discretizations of continuous Schr\"odinger operators.

A major motivation for the study of lattice Hamiltonians stems from their interpretation as discrete approximations of continuous few-body Schr\"odinger operators \cite{Faddeev:1986}. Placing the $N$-body problem on a lattice yields a setting where all operators are bounded, simplifying many functional-analytic questions. For example, the one-particle lattice Hamiltonian in one dimension is essentially governed by the classical perturbation theory of infinite Jacobi matrices (see, e.g., \cite{Teschl2000,Yafaev2017}).

Lattice Schr\"{o}dinger operators also provide realistic mathematical models for systems of particles propagating in periodic structures, such as ultracold atoms in optical crystals \cite{Bloch:2005,Winkler:2006}. Advances in optical-lattice experiments have enabled unprecedented control of physical parameters---temperature, atomic species, interaction strengths, and trapping potentials---leading to a renewed interest in few-body phenomena in these systems (see, e.g., \cite{Bloch:2005,JBC:1998,JZ:2005,Lewenstein:2012}).

A further source of motivation comes from the analysis of \emph{threshold phenomena}, which play a central role in modern few-body spectral theory. Remarkably, discrete lattice models often reproduce the key analytical features of their continuous counterparts, and this parallel has repeatedly proved to be both robust and insightful. A prominent example is the \emph{Efimov effect} \cite{Efimov:1970}, first predicted for three-body systems in $\mathbb{R}^3$ and later established rigorously in a series of mathematical works \cite{Ovchinnikov:1979,Sobolev:1993,Tamura:1991,Yafaev:1974}. 

Discrete analogues of the Efimov effect have been established for systems of three indistinguishable particles on a $\mathbb{Z}^3$ lattice. However, these results have so far been confined to zero-range, on-site pair interactions, as shown initially in \cite{Lakaev:1993} and later generalized for arbitrary particles in \cite{ALzM:2004}. This demonstrated universality across continuous and discrete settings now motivates a systematic study of lattice Hamiltonians with finite-range, compactly supported interactions. 

In the present paper, we study a two-parameter family of two-particle lattice 
Schr\"odinger operators
\begin{displaymath}
H_{\mu_1\mu_2}(K)=H_0(K)+V_{\mu_1\mu_2},
\end{displaymath}
acting in the Hilbert space $L^{2,\mathrm e}(\mathbb{T}^3)$, where 
$K\in\mathbb{T}^3$ denotes the total quasi-momentum. 
Here, $H_0(K)$ is the free Hamiltonian on the cubic lattice $\mathbb Z^3$, 
and the interaction term $V_{\mu_1\mu_2}$ depends on two real coupling 
parameters: the parameter $\mu_1$ represents the on-site (contact) 
interaction, while $\mu_2$ accounts for nearest-neighbor interactions 
between particles located at adjacent lattice sites.

In total quasi-momentum $K=0$, the operator $H_{\mu_1\mu_2}(0)$ leaves invariant 
the subspace of even functions $L^{2,\mathrm e}(\mathbb{T}^3)$. 
We demonstrate that this space admits a canonical orthogonal decomposition into 
three invariant subspaces,
\begin{displaymath}
L^{2,\mathrm e}(\mathbb{T}^3)
=
L^{2,\mathrm{e,s}}(\mathbb{T}^3)
\oplus
L^{2,\mathrm{e,a_{12}}}(\mathbb{T}^3)
\oplus
L^{2,\mathrm{e,mix}}(\mathbb{T}^3),
\end{displaymath}
corresponding respectively to the totally symmetric sector, the antisymmetric sector
 with respect to the exchange of the first two momentum components, 
and a mixed symmetry sector. 
Each of these subspaces is invariant under the action of $H_{\mu_1\mu_2}(0)$, 
and this decomposition reduces the spectral analysis to that of three 
simplified operators.

The main goal of this paper is to provide an explicit description of the critical finite-rank perturbations $V_{\mu_1\mu_2}$ for which small variations can cause eigenvalues of the operator $H_0(0) + V_{\mu_1\mu_2}$ to emerge or disappear outside its essential spectrum. The threshold analysis of the perturbation determinant, based on a Rouch\'e-type argument, shows that $H_{\mu_1\mu_2}(0)$ is critical at the edge of the spectrum if and only if the potential parameters satisfy a specific algebraic condition.

In particular, the analysis of the restrictions of $H_{\mu_1\mu_2}(0)$ to the invariant subspaces reveals a rich geometric structure in the parameter space $(\mu_1, \mu_2)$. In the symmetric subspace, two smooth curves partition the plane into regions where the number of eigenvalues below the bottom of the essential spectrum remains constant. In the other two subspaces, spectral transitions are governed by a single critical point on the $\mu_2$-axis. A complementary study above the top of the essential spectrum identifies two additional critical curves and one further critical point, which together determine the number of eigenvalues above the essential spectrum.

As a consequence, the operator $H_{\mu_1\mu_2}(0)$ may possess up to three eigenvalues lying outside its essential spectrum. Denoting by $\alpha$ and $\beta$ the numbers of eigenvalues below and above the essential spectrum, respectively, we prove that
$$
\alpha+\beta < 3\quad\text{or}\quad (\alpha,\beta)=\{(3,0), (0,3)\}.
$$
Notably, and in contrast with continuous Schr\"odinger operators, the lattice operator $H_{\mu_1\mu_2}(0)$ may exhibit eigenvalues simultaneously below \emph{and} above the essential spectrum.

Finally, we establish a sharp lower bound for the number of eigenvalues of \\
$H_{\mu_1\mu_2}(K)$, uniform in $K\in\mathbb{T}^3$, expressed explicitly in terms of the coupling parameters.

The study of lattice Schr\"{o}dinger operators with on-site and nearest-neighbor interactions was initiated in \cite{LBozorov:2009} for non-negative coupling constants. Subsequent works extended the analysis to one- and two-dimensional bosonic systems with arbitrary real parameters \cite{LO'zdemir:2016,LKhKh:2021}. More recently, models incorporating next-nearest-neighbour interactions were analyzed in one and two dimensions \cite{LACAOT:2023,LMA:2023}, where the precise number and distribution of eigenvalues were determined.

Thus the paper presents a detailed spectral analysis of a two-particle Schr\"odinger operators on the three-dimensional lattice $\mathbb{Z}^3$. The interactions are supported on a single site and its nearest-neighbor lattice points. These interaction structures play an important role in the study of three-body phenomena. Our focus is on characterizing the discrete spectrum and investigating the potential emergence of Efimov-like effects in three-particle systems with such finite-range interactions.

The paper is structured as follows.
Section \ref{sec:Hamiltonian} introduces the two-particle Hamiltonian in both position and quasi-momentum representations,  lattice Schr\"odinger operators  and describes its essential spectrum. Section \ref{sec:InvariantSubspaces} establishes the symmetry-induced invariant subspace decomposition and defines the corresponding restricted operators.
Section \ref{sec:MainResults} summarizes the main spectral results, including parameter–space classifications and the auxiliary lemmas and propositions required for their proofs.
Finally, Section \ref{sec:Proofs} contains the detailed proofs of the theorems stated in Section \ref{sec:MainResults}.

\section{Hamiltonian of a two-boson system on the three-dimensional lattice $\mathbb{Z}^3$
and its basic properties} 
\label{sec:Hamiltonian}

\subsection{Position-space representation}

Let $\mathbb{Z}^3 := \mathbb{Z}\times\mathbb{Z}\times\mathbb{Z}$ be the three-dimensional cubic lattice, and let
$\ell^{2,\mathrm{s}}(\mathbb{Z}^3\times\mathbb{Z}^3)$ denote the Hilbert space of square-summable
functions on $\mathbb{Z}^3\times\mathbb{Z}^3$ that are symmetric with respect to the exchange of
variables.  
We consider two identical spinless bosons moving on $\mathbb{Z}^3$ and interacting
via an on-site and a nearest-neighbour pair potential.  
The interaction strengths are denoted by $\mu_1,\mu_2\in\mathbb{R}$.

The two-particle Hamiltonian is defined by
\begin{displaymath}
\widehat{\mathbb{H}}_{\mu_1\mu_2}
= \widehat{\mathbb{H}}_0 + \widehat{\mathbb{V}}_{\mu_1\mu_2},
\qquad
\widehat{\mathbb{H}}_{\mu_1\mu_2}:
\ell^{2,\mathrm{s}}(\mathbb{Z}^3\times\mathbb{Z}^3) \to 
\ell^{2,\mathrm{s}}(\mathbb{Z}^3\times\mathbb{Z}^3).
\end{displaymath}

\paragraph{The free Hamiltonian}
The operator $\widehat{\mathbb{H}}_0$ acts as
\begin{displaymath}
(\widehat{\mathbb{H}}_0 \hat f)(x_1,x_2)
= \sum_{y\in\mathbb{Z}^3} \hat\varepsilon(x_1-y)\hat f(y,x_2)
+ \sum_{y\in\mathbb{Z}^3} \hat\varepsilon(x_2-y)\hat f(x_1,y),
\end{displaymath}
where the one-particle hopping kernel is given by
\begin{displaymath}
\hat\varepsilon(s)=
\begin{cases}
6,  & |s|=0,\\
-1, & |s|=1,\\
0,  & |s|>1,
\end{cases}
\qquad |s|:=|s_1|+|s_2|+|s_3|.
\end{displaymath}
Thus $\widehat{\mathbb{H}}_0$ corresponds to the discrete Laplacian on $\mathbb{Z}^3$
shifted by $6$.

\paragraph{The interaction}
The interaction acts as multiplication by a function depending only on the
relative coordinate:
\begin{displaymath}
(\widehat{\mathbb{V}}_{\mu_1\mu_2}\hat f)(x_1,x_2)
= \widehat{v}_{\mu_1\mu_2}(x_1-x_2)\hat f(x_1,x_2),
\end{displaymath}
where
\begin{displaymath}
\widehat{v}_{\mu_1\mu_2}(x)=
\begin{cases}
\mu_1, & |x|=0,\\
\mu_2, & |x|=1,\\
0,     & |x|>1.
\end{cases}
\end{displaymath}

\subsection{Quasimomentum representation}

Let $\mathbb{T}^3 := (\mathbb{R}/2\pi\mathbb{Z})^3$ and denote by $L^2(\mathbb{T}^3)$ the space of
square-integrable functions on the torus.  
The Fourier transform $\mathcal{F}:\ell^2(\mathbb{Z}^3)\to L^2(\mathbb{T}^3)$ is defined by
\begin{displaymath}
(\mathcal{F}\hat f)(p)
= \frac{1}{(2\pi)^{3/2}}\sum_{x\in\mathbb{Z}^3}\hat f(x)e^{ip\cdot x},
\qquad
p\in\mathbb{T}^3,
\end{displaymath}
,where $p\cdot x=p_1\cdot x_1+p_2\cdot x_2+p_3\cdot x_3$.
Its inverse is given by
\begin{displaymath}
(\mathcal{F}^{-1}f)(x)
= \frac{1}{(2\pi)^{3/2}}\int_{\mathbb{T}^3} f(p)e^{-ip\cdot x}\,\mathrm{d}p.
\end{displaymath}

Under the unitary map $\mathcal{F}\otimes\mathcal{F}$, the Hamiltonian becomes
\begin{displaymath}
\mathbb{H}_{\mu_1\mu_2}
= (\mathcal{F}\otimes\mathcal{F})\,\widehat{\mathbb{H}}_{\mu_1\mu_2}\,
  (\mathcal{F}\otimes\mathcal{F})^{-1}
= \mathbb{H}_0 + \mathbb{V}_{\mu_1\mu_2}.
\end{displaymath}

The free part is the multiplication operator
\begin{displaymath}
(\mathbb{H}_0 f)(p,q)
= \bigl[\varepsilon(p)+\varepsilon(q)\bigr]\,f(p,q), 
\end{displaymath}
with single-particle dispersion relation
\begin{displaymath}
\varepsilon(p)
= 2\sum_{i=1}^3 \bigl(1 - \cos p_i\bigr), \qquad p\in\mathbb{T}^3.
\end{displaymath}

The interaction is given by
\begin{displaymath}
(\mathbb{V}_{\mu_1\mu_2}f)(p,q)
= \frac{1}{(2\pi)^3}\int_{\mathbb{T}^3}
  v_{\mu_1\mu_2}(p-u)\,
  f(u,p+q-u)\,\mathrm{d}u,
\end{displaymath}
where $v_{\mu_1\mu_2}$ is the Fourier transform of $\widehat{v}_{\mu_1\mu_2}$:
\begin{displaymath}
v_{\mu_1\mu_2}(p)
= \mu_1 + 2\mu_2\sum_{i=1}^3 \cos p_i.
\end{displaymath}

\subsection{Decomposition with respect to the total quasimomentum}

Introducing the total quasimomentum
\begin{displaymath}
K := p+q \in \mathbb{T}^3,
\end{displaymath}
we obtain the von Neumann decomposition
\begin{displaymath}
\mathbb{H}_{\mu_1\mu_2}
= \int_{\mathbb{T}^3}^{\oplus} H_{\mu_1\mu_2}(K)\,\mathrm{d}K.
\end{displaymath}

Define the unitary map
\begin{displaymath}
\Upsilon: L^{2,\mathrm{s}}(\mathbb{T}^3\times\mathbb{T}^3)
\to \int_{\mathbb{T}^3}^{\oplus} L^{2,\mathrm{e}}(\mathbb{T}^3)\,\mathrm{d}K,
\qquad
(\Upsilon f)(K,k)
:= f\!\left(\frac{K}{2}+k,\frac{K}{2}-k\right),
\end{displaymath}
where $k\in\mathbb{T}^3$ is the relative momentum.  
The lattice Schr\"{o}dinger (fiber) operators are given by
\begin{displaymath}
H_{\mu_1\mu_2}(K)=H_0(K)+V_{\mu_1\mu_2}(K),
\end{displaymath}
where
\begin{displaymath}
(H_0(K)f)(p)=\mathcal{E}_K(p)f(p), 
\qquad 
\mathcal{E}_K(p)
= 4\sum_{i=1}^3 \bigl(1 - \cos(K_i/2)\cos p_i\bigr),
\end{displaymath}
and
\begin{equation*}
(V_{\mu_1\mu_2}f)(p)
= \frac{\mu_1}{(2\pi)^3}\int_{\mathbb{T}^3} f(q)\,\mathrm{d} q
+ \frac{2\mu_2}{(2\pi)^3}\sum_{i=1}^3 \cos p_i
    \int_{\mathbb{T}^3} \cos q_i\,f(q)\,\mathrm{d} q.
\end{equation*}
Thus $V_{\mu_1\mu_2}$ is a rank-four self-adjoint operator.

\subsection{Essential spectrum} \label{subsec:ess_spec}

We now determine the essential spectrum of the fiber operators.

\begin{proposition}\label{prop:ess-spectrum}
For every $K\in\mathbb{T}^3$ and $\mu_1,\mu_2\in\mathbb{R}$, the operator $H_{\mu_1\mu_2}(K)$ satisfies
\begin{displaymath}
\sigma_{\mathrm{ess}}\!\left(H_{\mu_1\mu_2}(K)\right)
= \sigma\!\left(H_0(K)\right)
= \bigl[\mathcal{E}_{\min}(K),\,\mathcal{E}_{\max}(K)\bigr],
\end{displaymath}
where
\begin{displaymath}
\mathcal{E}_{\min}(K)
:= \min_{p\in\mathbb{T}^3}\mathcal{E}_K(p), 
\qquad
\mathcal{E}_{\max}(K)
:= \max_{p\in\mathbb{T}^3}\mathcal{E}_K(p).
\end{displaymath}
\end{proposition}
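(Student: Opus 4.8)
The plan is to combine Weyl's theorem on the stability of the essential spectrum under compact perturbations with an explicit description of the spectrum of the free multiplication operator $H_0(K)$.

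First I would observe, directly from \eqref{eq:two_boson_V}, that $V_{\mu_1\mu_2}(K)$ maps $L^{2,\mathrm e}(\T^3)$ into the finite-dimensional subspace spanned by the four functions $1,\cos p_1,\cos p_2,\cos p_3$; hence $V_{\mu_1\mu_2}(K)$ has rank at most four and is in particular a compact (indeed finite-rank) self-adjoint operator. Since $H_{\mu_1\mu_2}(K)=H_0(K)+V_{\mu_1\mu_2}(K)$ with $H_0(K)$ bounded self-adjoint, Weyl's theorem gives $\sigma_{\mathrm{ess}}\!\left(H_{\mu_1\mu_2}(K)\right)=\sigma_{\mathrm{ess}}\!\left(H_0(K)\right)$ for all $K\in\T^3$ and all $\mu_1,\mu_2\in\R$.

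Next I would identify $\sigma(H_0(K))$. As $H_0(K)$ is multiplication by the bounded continuous function $\mathcal{E}_K$ on $L^2$ of the compact connected torus $\T^3$, its spectrum is the essential range of $\mathcal{E}_K$, which by continuity and connectedness of $\T^3$ equals the interval $\bigl[\mathcal{E}_{\min}(K),\mathcal{E}_{\max}(K)\bigr]$ (degenerating to the single point $\{12\}$ when $K=(\pi,\pi,\pi)$). To upgrade this to the \emph{essential} spectrum I would check that every level set $\{p\in\T^3:\mathcal{E}_K(p)=\lambda\}$ is Lebesgue-null unless $\mathcal{E}_K$ is constant: if $\cos(K_i/2)\neq 0$ for some $i$, then, fixing the remaining variables, the equation $\mathcal{E}_K(p)=\lambda$ pins down $\cos p_i$ and hence $p_i$ to finitely many values, so the level set is null; consequently $H_0(K)$ has no eigenvalues and $\sigma(H_0(K))=\sigma_{\mathrm{ess}}(H_0(K))$. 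In the remaining case $K=(\pi,\pi,\pi)$ one has $\mathcal{E}_K\equiv 12$, so $H_0(K)=12\,I$ has purely essential spectrum $\{12\}$ anyway. Chaining the three observations yields the stated identity.

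The argument is essentially routine; the only point requiring a little care is the measure-zero property of the level sets of $\mathcal{E}_K$ — equivalently, the absence of point spectrum for $H_0(K)$ — together with tracking the degenerate fiber $K=(\pi,\pi,\pi)$, where the essential spectrum collapses to a single point.
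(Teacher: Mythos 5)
Your proposal is correct and follows essentially the same route as the paper: Weyl's theorem applied to the finite-rank (hence compact) perturbation $V_{\mu_1\mu_2}$, followed by the identification of $\sigma(H_0(K))$ as the range of the continuous function $\mathcal{E}_K$ on the connected compact torus. The only difference is that you explicitly verify $\sigma(H_0(K))=\sigma_{\mathrm{ess}}(H_0(K))$ via the null level sets and track the degenerate fiber $K=(\pi,\pi,\pi)$, a point the paper's proof passes over silently; this extra care is harmless and slightly strengthens the argument.
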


\begin{proof}
Since $V_{\mu_1\mu_2}$ is a rank-four operator, it is compact.  
Hence, by Weyl's theorem,
\begin{displaymath}
\sigma_{\mathrm{ess}}\!\left(H_{\mu_1\mu_2}(K)\right)
= \sigma_{\mathrm{ess}}\!\left(H_0(K)\right).
\end{displaymath}
The operator $H_0(K)$ is multiplication by the continuous function
$\mathcal{E}_K$ on the compact set $\mathbb{T}^3$, so its spectrum is exactly the range of
$\mathcal{E}_K$, i.e.,
\begin{displaymath}
\sigma(H_0(K))
=\sigma_{\mathrm{ess}}(H_0(K))
= [\mathcal{E}_{\min}(K),\mathcal{E}_{\max}(K)].
\end{displaymath}
\end{proof}

\begin{corollary}\label{cor:location-discrete}
For every $K\in\mathbb{T}^3$, all eigenvalues of $H_{\mu_1\mu_2}(K)$ lie outside the spectral
band of $H_0(K)$:
\begin{displaymath}
\sigma_{\mathrm{p}}(H_{\mu_1\mu_2}(K))
\subset (-\infty,\mathcal{E}_{\min}(K))
\cup
(\mathcal{E}_{\max}(K),\infty).
\end{displaymath}
All such eigenvalues are isolated, have finite multiplicity, and may accumulate only at
$\mathcal{E}_{\min}(K)$ or $\mathcal{E}_{\max}(K)$.
\end{corollary}

\begin{proof}
The statement follows from Proposition \ref{prop:ess-spectrum} and the fact that
 $V_{\mu_1\mu_2}$ is the compact self-adjoint operator.
\end{proof}

\section{Invariant subspaces of the fiber Hamiltonian at $K=0$}
\label{sec:InvariantSubspaces}

In this section we describe the natural orthogonal decomposition of 
$L^{2, \mathrm{e}}(\mathbb{T}^3)$ induced by lattice symmetries at zero total
quasimomentum and prove that the fiber Hamiltonian
$H_{\mu_{1}\mu_{2}}(0)$ reduces with respect to this decomposition.

\begin{theorem}\label{Theor:InvariantSub}   
The Hilbert space $ L^{2,\mathrm{e}}(\mathbb{T}^3) $ decomposes orthogonally as
\begin{equation}\label{direct_sum}
L^{2,\mathrm{e}}(\mathbb{T}^3) 
= L^{2,\mathrm{e,s}}(\mathbb{T}^3) \oplus L^{2,\mathrm{e,a_{12}}}(\mathbb{T}^3) \oplus L^{2,\mathrm{e,mix}}(\mathbb{T}^3),
\end{equation}
where
\begin{align*}
L^{2,\mathrm{e,s}}(\mathbb{T}^3)
  &= \{ f\in L^{2,\mathrm{e}}(\mathbb{T}^3): f \ \text{is symmetric under all permutations of } (p_1,p_2,p_3) \},\\
L^{2,\mathrm{e,a_{12}}}(\mathbb{T}^3)
  &= \{ f\in L^{2,\mathrm{e}}(\mathbb{T}^3): f(p_1,p_2,p_3) = -f(p_2,p_1,p_3) \},\\
L^{2,\mathrm{e,mix}}(\mathbb{T}^3)
  &= \Bigl\{ f\in L^{2,\mathrm{e}}(\mathbb{T}^3): f(p_1,p_2,p_3)=f(p_2,p_1,p_3),\ f=f_1+f_2,\\
  &\qquad f_1(p_1,p_2,p_3)=-f_1(p_3,p_2,p_1),\quad 
         f_2(p_1,p_2,p_3)=-f_2(p_1,p_3,p_2) \Bigr\}.
\end{align*}
\end{theorem}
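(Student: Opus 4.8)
The plan is to exhibit three mutually orthogonal projections on $L^{2,\mathrm e}(\mathbb T^3)$ that sum to the identity and whose ranges are exactly the three listed subspaces. The natural tool is the action of the symmetric group $S_3$ on $L^{2,\mathrm e}(\mathbb T^3)$ by permutation of the coordinates $p_1,p_2,p_3$; write $U_\sigma$, $\sigma\in S_3$, for the corresponding operators. These are unitary since the Lebesgue measure on $\mathbb T^3$ is invariant under coordinate permutations, and they map $L^{2,\mathrm e}(\mathbb T^3)$ into itself because the reflection $p\mapsto-p$ commutes with coordinate permutations. I then introduce the total symmetrizer $\mathcal S=\tfrac16\sum_{\sigma\in S_3}U_\sigma$, the single‑transposition projection $A_{12}=\tfrac12\bigl(I-U_{(12)}\bigr)$, and $P:=I-\mathcal S-A_{12}$.

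First I would record the elementary facts that $\mathcal S$ and $A_{12}$ are orthogonal projections (self‑adjointness from $U_\sigma^{*}=U_{\sigma^{-1}}$, idempotency by reindexing the group sum), and that $\mathcal S U_\tau=U_\tau\mathcal S=\mathcal S$ for every $\tau\in S_3$; in particular $\mathcal S A_{12}=A_{12}\mathcal S=0$, so $\mathcal S+A_{12}$ is again an orthogonal projection, hence so is $P$, and $\mathcal S P=A_{12}P=0$. Thus $I=\mathcal S+A_{12}+P$ is a resolution of the identity into three pairwise orthogonal projections, and it suffices to identify their ranges. That $\Ran\mathcal S=L^{2,\mathrm{e,s}}(\mathbb T^3)$ is immediate ($\mathcal S f=f$ iff $U_\sigma f=f$ for all $\sigma$, i.e.\ $f$ is totally symmetric), and likewise $\Ran A_{12}=\{f:U_{(12)}f=-f\}=L^{2,\mathrm{e,a_{12}}}(\mathbb T^3)$.

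The heart of the argument is the identification $\Ran P=L^{2,\mathrm{e,mix}}(\mathbb T^3)$, equivalently $\Ker\mathcal S\cap\Ker A_{12}=L^{2,\mathrm{e,mix}}(\mathbb T^3)$. For the inclusion ``$\supseteq$'', if $f=f_1+f_2$ with $U_{(12)}f=f$, $U_{(13)}f_1=-f_1$ and $U_{(23)}f_2=-f_2$, then $A_{12}f=0$, while applying $\mathcal S$ to the last two relations and using $\mathcal S U_\tau=\mathcal S$ gives $\mathcal S f_1=-\mathcal S f_1=0$ and $\mathcal S f_2=0$, hence $\mathcal S f=0$ and $Pf=f$. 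For ``$\subseteq$'', suppose $Pf=f$, i.e.\ $U_{(12)}f=f$ and $\sum_{\sigma\in S_3}U_\sigma f=0$. Splitting $S_3$ into the three left cosets of $\{e,(12)\}$ and using $U_{(12)}f=f$, the six‑term sum collapses to $f+U_{(13)}f+U_{(23)}f=0$; then the explicit functions $f_1:=\tfrac13\bigl(f-U_{(13)}f\bigr)$ and $f_2:=\tfrac13\bigl(f-U_{(23)}f\bigr)$ lie in $L^{2,\mathrm e}(\mathbb T^3)$, satisfy $f_1+f_2=f$, and obey $U_{(13)}f_1=-f_1$, $U_{(23)}f_2=-f_2$ because $U_{(13)}^2=U_{(23)}^2=I$; together with $U_{(12)}f=f$ this shows $f\in L^{2,\mathrm{e,mix}}(\mathbb T^3)$. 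I expect this to be the main obstacle: it is where one must carry out the coset bookkeeping that produces the relation $f+U_{(13)}f+U_{(23)}f=0$ and then write down a concrete splitting $f=f_1+f_2$, and it is also what guarantees, as a by‑product, that $L^{2,\mathrm{e,mix}}(\mathbb T^3)$ — defined a priori only as a linear subspace — is in fact closed, being the range of the orthogonal projection $P$.

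Assembling the three identifications with $I=\mathcal S+A_{12}+P$ and the pairwise orthogonality of the ranges yields the orthogonal direct sum \eqref{direct_sum}. In preparation for the subsequent spectral analysis I would also observe that the dispersion $\mathcal E_0(p)$ and the interaction $V_{\mu_1\mu_2}(0)$ in \eqref{eq:two_boson_V} are invariant under every $U_\sigma$, so that $\mathcal S$, $A_{12}$ and $P$ commute with $H_{\mu_1\mu_2}(0)$ and the three summands in \eqref{direct_sum} are reducing subspaces for $H_{\mu_1\mu_2}(0)$. Finally one may remark that, in representation‑theoretic terms, $L^{2,\mathrm{e,s}}(\mathbb T^3)$ is the trivial‑isotypic part of the $S_3$‑action while $L^{2,\mathrm{e,a_{12}}}(\mathbb T^3)$ and $L^{2,\mathrm{e,mix}}(\mathbb T^3)$ regroup its sign‑ and standard‑isotypic parts according to the $U_{(12)}$‑eigenvalue; but the projection argument above is self‑contained and does not require the character theory of $S_3$.
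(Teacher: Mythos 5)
Your proof is correct and follows essentially the same route as the paper's: the same three projections $\tfrac16\sum_{\sigma}U_\sigma$, $\tfrac12\bigl(I-U_{(12)}\bigr)$ and their complement, with the identical splitting $f_1=\tfrac13\bigl(I-U_{(13)}\bigr)f$, $f_2=\tfrac13\bigl(I-U_{(23)}\bigr)f$ in the mixed sector. If anything you are slightly more complete, since you make the coset collapse $f+U_{(13)}f+U_{(23)}f=0$ explicit and verify both inclusions for the range of the third projection, whereas the paper only checks one.
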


\begin{proof}
For each permutation $\sigma\in S_3$, define the unitary operator
\begin{displaymath}
(U_\sigma f)(p_1,p_2,p_3)=f(p_{\sigma(1)},p_{\sigma(2)},p_{\sigma(3)}),
\end{displaymath}
which preserves $L^{2,\mathrm e}(\mathbb{T}^3)$. Let $U_{12},U_{13},U_{23}$ be the transpositions.
 We introduce
\begin{displaymath}
P_{\mathrm{s}}=\frac{1}{6}\sum_{\sigma\in S_3} U_{\sigma},\quad
P_{\mathrm{a_{12}}}=\frac{1}{2}(I-U_{12}),\quad
P_{\mathrm{mix}}=I-P_{\mathrm{s}}-P_{\mathrm{a_{12}}}.
\end{displaymath}
The operators $P_{\mathrm{s}}$ and $P_{\mathrm{a_{12}}}$ are orthogonal projections and satisfy
$P_{\mathrm{s}}P_{\mathrm{a_{12}}}=0$. Hence $P_{\mathrm{mix}}$ is also an orthogonal projection and
\begin{displaymath}
P_{\mathrm{s}}+P_{\mathrm{a_{12}}}+P_{\mathrm{mix}}=I.
\end{displaymath}

Now, we show that the range of the projection $P_{\theta}$ coincides with the subspace $L^{2,\mathrm{e, \theta}}(\mathbb{T}^3),\,\theta\in\{\mathrm{s, a_{12}, mix}\}.$
By construction,
\begin{displaymath}
\mathrm{Ran} P_{\mathrm{s}} = L^{2,\mathrm{e,s}}(\mathbb{T}^3),\quad
\mathrm{Ran}P_{\mathrm{a_{12}}} = L^{2,\mathrm{e,a_{12}}}(\mathbb{T}^3).
\end{displaymath}
Let $g\in\mathrm{Ran}\,P_{\mathrm{mix}}$. Then $P_{\mathrm{s}}g=0$ and $U_{12}g=g$ which implies $g+U_{13}g+U_{23}g=0$.
Define
\begin{displaymath}
g^{(13)}=\frac{1}{3}(I-U_{13})g,\qquad
g^{(23)}=\frac{1}{3}(I-U_{23})g.
\end{displaymath}
Then $g^{(13)}$ is antisymmetric in $(p_1,p_3)$, $g^{(23)}$ is antisymmetric in $(p_2,p_3)$, and
$
g=g^{(13)}+g^{(23)}.
$
Thus $\mathrm{Ran}P_{\mathrm{mix}} = L^{2,\mathrm{e,mix}}(\mathbb{T}^3)$.

Since the three projections are mutually orthogonal,
their ranges are mutually orthogonal.  
From 
$
P_{\mathrm{s}}+P_{\mathrm{a_{12}}}+P_{\mathrm{mix}}=I
$
we obtain for every $f\in L^{2,\mathrm e}(\mathbb{T}^3)$:
\begin{displaymath}
f=P_{\mathrm{s}}f+P_{\mathrm{a_{12}}}f+P_{\mathrm{mix}}f,
\end{displaymath}
which gives the decomposition \eqref{direct_sum}, and uniqueness follows from orthogonality.
\end{proof}

\begin{lemma}\label{invariance} 
Each subspace in \eqref{direct_sum} is invariant under the operator 
$H_{\mu_1\mu_2}(0)$, and hence reduces it.
\end{lemma}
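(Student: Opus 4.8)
The strategy is to show that each of the three permutation projections $P_{\mathrm{s}}$, $P_{\mathrm{a_{12}}}$, $P_{\mathrm{mix}}$ commutes with $H_{\mu_1\mu_2}(0)$; since the ranges of these projections are exactly the three subspaces in \eqref{direct_sum}, commutation immediately gives both invariance and the fact that the decomposition reduces the operator. Because $P_{\mathrm{mix}} = I - P_{\mathrm{s}} - P_{\mathrm{a_{12}}}$, it suffices to check the commutation for $P_{\mathrm{s}}$ and $P_{\mathrm{a_{12}}}$, and for those it is enough to verify that every permutation unitary $U_\sigma$, $\sigma\in S_3$, commutes with $H_{\mu_1\mu_2}(0) = H_0(0) + V_{\mu_1\mu_2}(0)$.

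The verification splits into the free part and the interaction. For $H_0(0)$, which is multiplication by $\mathcal{E}_0(p) = 4\sum_{i=1}^3(1-\cos p_i)$, the key observation is that $\mathcal{E}_0$ is a symmetric function of $(p_1,p_2,p_3)$: permuting coordinates leaves it unchanged. Hence $U_\sigma H_0(0) U_\sigma^{-1}$ is multiplication by $\mathcal{E}_0(p_{\sigma^{-1}(1)},\dots) = \mathcal{E}_0(p)$, so $U_\sigma$ commutes with $H_0(0)$. For the interaction $V_{\mu_1\mu_2}(0)$ given by \eqref{eq:two_boson_V}, I would compute $U_\sigma V_{\mu_1\mu_2}(0) U_\sigma^{-1}$ directly: the first term $\tfrac{\mu_1}{(2\pi)^3}\int f(q)\,\d q$ is a constant-kernel operator and is manifestly permutation-invariant (a change of variables $q\mapsto \sigma q$ in the integral leaves it fixed); the second term $\tfrac{2\mu_2}{(2\pi)^3}\sum_i \cos p_i \int \cos q_i\, f(q)\,\d q$ transforms, under conjugation by $U_\sigma$, into $\tfrac{2\mu_2}{(2\pi)^3}\sum_i \cos p_{\sigma(i)} \int \cos q_{\sigma(i)} f(q)\,\d q$ after the substitution $q\mapsto$ permuted $q$, and re-indexing the sum over $i$ shows this equals the original operator. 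Thus $V_{\mu_1\mu_2}(0)$ commutes with each $U_\sigma$ as well. Combining the two pieces, $U_\sigma H_{\mu_1\mu_2}(0) = H_{\mu_1\mu_2}(0) U_\sigma$ for all $\sigma\in S_3$.

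From here the conclusion is routine: $P_{\mathrm{s}}$ and $P_{\mathrm{a_{12}}}$ are finite linear combinations of the $U_\sigma$, so they commute with $H_{\mu_1\mu_2}(0)$, and hence so does $P_{\mathrm{mix}} = I - P_{\mathrm{s}} - P_{\mathrm{a_{12}}}$. A bounded self-adjoint operator that commutes with an orthogonal projection $P$ leaves both $\Ran P$ and $\Ran(I-P)$ invariant, i.e.\ the pair $(\Ran P, \Ran(I-P))$ reduces it; applying this to each of the three mutually orthogonal projections whose sum is $I$ shows that each summand in \eqref{direct_sum} is invariant and reduces $H_{\mu_1\mu_2}(0)$. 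I do not anticipate a serious obstacle here; the only point requiring a little care is the bookkeeping in the change of variables for the rank-four interaction term, making sure the index $i$ in the $\cos p_i\,\cos q_i$ pairing is permuted consistently in both factors so that the sum genuinely returns to itself. One should also note explicitly that all $U_\sigma$ preserve the ambient even subspace $L^{2,\mathrm{e}}(\mathbb{T}^3)$ (already observed in the proof of Theorem~\ref{Theor:InvariantSub}), so the entire argument takes place within that space.
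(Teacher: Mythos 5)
Your proof is correct, but it takes a genuinely different route from the paper. The paper argues subspace by subspace: it takes a generic element $f$ of each of $L^{2,\mathrm{e,s}}$, $L^{2,\mathrm{e,a_{12}}}$, $L^{2,\mathrm{e,mix}}$, computes the moment integrals $\int_{\T^3}\cos q_i\,f(q)\,\d q$ explicitly using the symmetry of $f$, and reads off that $V_{\mu_1\mu_2}f$ again has the required symmetry type (the mixed case requiring the extra identity $\int\cos q_1 f_1=\int\cos q_2 f_2$ for the two antisymmetric components). You instead prove the single structural statement $U_\sigma H_{\mu_1\mu_2}(0)U_\sigma^{-1}=H_{\mu_1\mu_2}(0)$ for all $\sigma\in S_3$ — which your change-of-variables computation for the $\sum_i\cos p_i\int\cos q_i f$ term correctly establishes — and then deduce that the three projections $P_{\mathrm{s}}$, $P_{\mathrm{a_{12}}}$, $P_{\mathrm{mix}}$, being polynomials in the $U_\sigma$, commute with the Hamiltonian; commutation with an orthogonal projection is exactly reduction. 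Your argument is cleaner and handles all three sectors uniformly, in particular sidestepping the least transparent computation in the paper (case (iii)); it does rely on the identification of the projection ranges with the three subspaces, but that is already established in the proof of Theorem~\ref{Theor:InvariantSub}. What the paper's more computational route buys in exchange is the explicit form of $V_{\mu_1\mu_2}$ restricted to each subspace, which it immediately reuses to obtain the rank-one decompositions in Lemma~\ref{lem:V-decompos-invariance}.
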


\begin{proof}
Since $H_{\mu_1\mu_2}(0)=H_0(0)+V_{\mu_1\mu_2}$, it suffices to show that the
three subspaces are invariant under both terms.
First, since the operator $H_0(0)$ is the multiplication operator by the fully symmetric function
$\mathcal{E}_0(p_1,p_2,p_3)$, each subspace is invariant under $H_0(0)$.
Next,
for $f\in L^{2,\mathrm{e}}(\mathbb{T}^3)$,
\begin{displaymath}
(V_{\mu_1\mu_2}f)(p)
= \frac{\mu_1}{8\pi^{3}}\!\int_{\mathbb{T}^3} f(q)\,\mathrm{d} q
 +\frac{\mu_2}{4\pi^{3}}\sum_{i=1}^{3} \cos p_i
   \!\int_{\mathbb{T}^3}\! \cos q_i\, f(q)\,\mathrm{d} q.
\end{displaymath}
\textit{(i) Symmetric subspace.}
If $f\in L^{2,\mathrm{e,s}}(\mathbb{T}^3)$, then the integrals
$\int_{\mathbb{T}^3} \cos q_i f(q)\,\mathrm{d} q$ are equal for $i=1,2,3$.
Hence
\begin{displaymath}
(V_{\mu_1\mu_2}f)(p)
= c_0 + c_1(\cos p_1+\cos p_2+\cos p_3),
\end{displaymath}
which is fully symmetric; thus the subspace is invariant.

\textit{(ii) Antisymmetric subspace.}
If $f\in L^{2,\mathrm{e,a_{12}}}(\mathbb{T}^3)$, then
\begin{displaymath}
\int_{\mathbb{T}^3} g(q)f(q)\,\mathrm{d} q = 0,\quad g\in\{1, \cos q_3, \cos q_1+\cos q_2\}.
\end{displaymath}
Hence
\begin{displaymath}
(V_{\mu_1\mu_2}f)(p)
   = c(\cos p_1-\cos p_2),
\end{displaymath}
which is antisymmetric in $(p_1,p_2)$.  
Thus $L^{2,\mathrm{e,a_{12}}}(\mathbb{T}^3)$ is invariant.

\textit{(iii) Mixed subspace.}
Let $f\in L^{2,\mathrm{e,mix}}(\mathbb{T}^3)$.  
Then $f=f_1+f_2$, where $f$ is symmetric in $(p_1,p_2)$ and
\begin{displaymath}
f_1(p_1,p_2,p_3)=-f_1(p_3,p_2,p_1),\qquad
f_2(p_1,p_2,p_3)=-f_2(p_1,p_3,p_2).
\end{displaymath}
A computation analogous to part (ii) yields
\begin{displaymath}
(V_{\mu_1\mu_2}f)(p)
   = c\bigl(\cos p_1+\cos p_2 - 2\cos p_3\bigr),
\end{displaymath}
where the coefficient $c$ is the common value of the integrals
$\int\cos q_1 f_1(q) \,\mathrm{d} q \\
= \int\cos q_2 f_2(q) \,\mathrm{d} q$.
The resulting function is symmetric in $(p_1,p_2)$ and decomposes into
terms antisymmetric in $(p_1,p_3)$ and $(p_2,p_3)$, so it lies in
$L^{2,\mathrm{e,mix}}(\mathbb{T}^3)$.

Thus all three subspaces are invariant under $H_0(0)$ and $V_{\mu_1\mu_2}$,
and because they form an orthogonal direct sum of $L^{2,\mathrm{e}}(\mathbb{T}^3)$,
they reduce $H_{\mu_1\mu_2}(0)$.
\end{proof}

\begin{lemma}\label{lem:orth-basis}
The functions
\begin{displaymath}
\alpha^{\mathrm{s}}_{1}(p)=\frac{1}{\sqrt{8\pi^3}},\qquad
\alpha^{\mathrm{s}}_{2}(p)=\frac{\cos p_{1}+\cos p_{2}+\cos p_{3}}{\sqrt{12\pi^3}},
\end{displaymath}
\begin{displaymath}
\alpha^{\mathrm{a_{12}}}_{3}(p)=\frac{\cos p_1-\cos p_2}{\sqrt{8\pi^3}},\qquad
\alpha^{\mathrm{mix}}_{3}(p)=\frac{\cos p_1+\cos p_2-2\cos p_3}{\sqrt{24\pi^3}},
\end{displaymath}
form an orthonormal set in $L^{2,\mathrm{e}}(\mathbb{T}^3)$, with
\begin{displaymath}
\{\alpha^{\mathrm{s}}_1,\alpha^{\mathrm{s}}_2\}\subset L^{2,\mathrm{e,s}}(\mathbb{T}^3),\quad
\alpha^{\mathrm{a_{12}}}_3\in L^{2,\mathrm{e,a_{12}}}(\mathbb{T}^3),\quad
\alpha^{\mathrm{mix}}_3\in L^{2,\mathrm{e,mix}}(\mathbb{T}^3).
\end{displaymath}
\end{lemma}

\begin{proof}
Using
\begin{displaymath}
\int_{\mathbb{T}^3}1\,\mathrm{d}p=8\pi^3,\qquad 
\int_{\mathbb{T}^3}\cos p_i\,\mathrm{d}p=0,\qquad
\int_{\mathbb{T}^3}\cos p_i\cos p_j\,\mathrm{d}p=
\begin{cases}
4\pi^3,&i=j,\\
0,&i\neq j,
\end{cases}
\end{displaymath}
one checks directly that all four functions have norm $1$ and that their pairwise inner products vanish.

The membership in the symmetry subspaces follows from the obvious parity properties:  
$\alpha^{\mathrm{s}}_1,\alpha^{\mathrm{s}}_2$ are symmetric in all coordinates;  
$\alpha^{\mathrm{a_{12}}}_3$ changes sign under $p_1\leftrightarrow p_2$;  
$\alpha^{\mathrm{mix}}_3$ is symmetric in $(p_1,p_2)$ and orthogonal to the previous vectors.
\end{proof}

\begin{lemma}\label{lem:V-decompos-invariance}
Let $\alpha^{\mathrm{s}}_{1},\alpha^{\mathrm{s}}_{2},
\alpha^{\mathrm{a_{12}}}_{3},\alpha^{\mathrm{mix}}_{3}$ be the vectors from 
Lemma \ref{lem:orth-basis}.  
Then:

(i) The operator $V_{\mu_1\mu_2}$ is a rank-four self-adjoint operator and admits the representation
\begin{equation}\label{eq:V-rank-one}
V_{\mu_1\mu_2}
=\mu_1(\cdot,\alpha^{\mathrm{s}}_{1})\alpha^{\mathrm{s}}_{1}
+\mu_2(\cdot,\alpha^{\mathrm{s}}_{2})\alpha^{\mathrm{s}}_{2}
+\mu_2(\cdot,\alpha^{\mathrm{a_{12}}}_{3})\alpha^{\mathrm{a_{12}}}_{3}
+\mu_2(\cdot,\alpha^{\mathrm{mix}}_{3})\alpha^{\mathrm{mix}}_{3}.
\end{equation}

(ii)  With the orthogonal decomposition
\begin{displaymath}
L^{2,\mathrm{e}}(\mathbb{T}^3)
=L^{2,\mathrm{e,s}}(\mathbb{T}^3)\oplus L^{2,\mathrm{e,a_{12}}}(\mathbb{T}^3)\oplus L^{2,\mathrm{e,mix}}(\mathbb{T}^3),
\end{displaymath}
 the restrictions of $V_{\mu_1\mu_2}$ are
\begin{align}\label{eq:V-restrictions_2}
V^{\mathrm{s}}_{\mu_1\mu_2}
&=\mu_1(\cdot,\alpha^{\mathrm{s}}_{1})\alpha^{\mathrm{s}}_{1}
+\mu_2(\cdot,\alpha^{\mathrm{s}}_{2})\alpha^{\mathrm{s}}_{2},\\
V^{\mathrm{a_{12}}}_{\mu_2}
&=\mu_2(\cdot,\alpha^{\mathrm{a_{12}}}_{3})\alpha^{\mathrm{a_{12}}}_{3},\nonumber\\
V^{\mathrm{mix}}_{\mu_2}
&=\mu_2(\cdot,\alpha^{\mathrm{mix}}_{3})\alpha^{\mathrm{mix}}_{3}.\nonumber
\end{align}

Consequently,
\begin{align*}
H^{\mathrm{s}}_{\mu_1\mu_2}(0)&=H_0(0)+V^{\mathrm{s}}_{\mu_1\mu_2},\nonumber\\
H^{\mathrm{a_{12}}}_{\mu_2}(0)&=H_0(0)+V^{\mathrm{a_{12}}}_{\mu_2},\\
H^{\mathrm{mix}}_{\mu_2}(0)&=H_0(0)+V^{\mathrm{mix}}_{\mu_2}.\nonumber
\end{align*}
\end{lemma}

\begin{proof}
The identity
\begin{align*}
6\sum\limits_{j=1}^{3}\cos p_j\cos q_j
&=2\Big(\sum_{j=1}^{3}\cos p_j\Big)\Big(\sum_{j=1}^{3}\cos q_j\Big)
+3(\cos p_1-\cos p_2)(\cos q_1-\cos q_2)\\
&+(\cos p_1+\cos p_2-2\cos p_3)(\cos q_1+\cos q_2-2\cos q_3)
\end{align*}
shows that $V_{\mu_1\mu_2}$ is a linear combination of the four rank-one operators
$(f,\alpha)\alpha$ for the listed basis vectors, giving \eqref{eq:V-rank-one}.
Since the vectors are orthonormal, the operator is self-adjoint with rank of four.

The formulas in \eqref{eq:V-restrictions_2} follow by keeping only the terms belonging to each symmetry subspace.  
As $H_0(0)$ preserves the decomposition, the same holds for $H_{\mu_1\mu_2}(0)=H_0(0)+V_{\mu_1\mu_2}$.
\end{proof}
 
\begin{corollary}\label{cor:operator_decomposition} 
With respect to the orthogonal decomposition \eqref{direct_sum},  
the operator $H_{\mu_1\mu_2}(0)$ admits the following direct sum decomposition:
\begin{equation*}
H_{\mu_1\mu_2}(0)
= H^{\mathrm{s}}_{\mu_1\mu_2}(0) 
\oplus H^{\mathrm{a_{12}}}_{\mu_2}(0) 
\oplus H^{\mathrm{mix}}_{\mu_2}(0).
\end{equation*}
\end{corollary}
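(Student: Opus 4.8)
The plan is to assemble the claimed direct-sum decomposition of $H_{\mu_1\mu_2}(0)$ directly from the two structural results already proved, namely Lemma~\ref{invariance} and Lemma~\ref{lem:V-decompos-invariance}. The key point is that an orthogonal decomposition of the underlying Hilbert space together with invariance of a bounded self-adjoint operator under each summand is precisely the data needed to write the operator as a direct sum of its restrictions; no further analysis is required.

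First I would recall from Theorem~\ref{Theor:InvariantSub} that
\[
L^{2,\mathrm{e}}(\mathbb{T}^3)
= L^{2,\mathrm{e,s}}(\mathbb{T}^3) \oplus L^{2,\mathrm{e,a_{12}}}(\mathbb{T}^3) \oplus L^{2,\mathrm{e,mix}}(\mathbb{T}^3)
\]
is an orthogonal direct sum, with associated orthogonal projections $P_{\mathrm{s}}$, $P_{\mathrm{a_{12}}}$, $P_{\mathrm{mix}}$ satisfying $P_{\mathrm{s}}+P_{\mathrm{a_{12}}}+P_{\mathrm{mix}}=I$. Next I would invoke Lemma~\ref{invariance}, which asserts that each of these three subspaces is invariant under $H_{\mu_1\mu_2}(0)$ and that the operator is reduced by the decomposition; equivalently, $H_{\mu_1\mu_2}(0)$ commutes with each projection $P_{\mathrm{s}}$, $P_{\mathrm{a_{12}}}$, $P_{\mathrm{mix}}$. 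Then, for any $f\in L^{2,\mathrm{e}}(\mathbb{T}^3)$, writing $f=P_{\mathrm{s}}f+P_{\mathrm{a_{12}}}f+P_{\mathrm{mix}}f$ and using that $H_{\mu_1\mu_2}(0)$ maps each summand into itself, I obtain
\[
H_{\mu_1\mu_2}(0)f
= H_{\mu_1\mu_2}(0)P_{\mathrm{s}}f + H_{\mu_1\mu_2}(0)P_{\mathrm{a_{12}}}f + H_{\mu_1\mu_2}(0)P_{\mathrm{mix}}f,
\]
where the three terms lie in $L^{2,\mathrm{e,s}}(\mathbb{T}^3)$, $L^{2,\mathrm{e,a_{12}}}(\mathbb{T}^3)$, $L^{2,\mathrm{e,mix}}(\mathbb{T}^3)$ respectively. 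By the very definitions in \eqref{eq:H-restrictions_2} of Lemma~\ref{lem:V-decompos-invariance}, these restricted actions are exactly $H^{\mathrm{s}}_{\mu_1\mu_2}(0)$, $H^{\mathrm{a_{12}}}_{\mu_2}(0)$, and $H^{\mathrm{mix}}_{\mu_2}(0)$. This is the statement $H_{\mu_1\mu_2}(0) = H^{\mathrm{s}}_{\mu_1\mu_2}(0) \oplus H^{\mathrm{a_{12}}}_{\mu_2}(0) \oplus H^{\mathrm{mix}}_{\mu_2}(0)$.

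There is essentially no obstacle here: the corollary is a formal consequence of the preceding two lemmas, and the only thing to be careful about is bookkeeping — confirming that the interaction pieces sort correctly into the subspaces, which is exactly what \eqref{eq:V-restrictions_2} records (the $\alpha^{\mathrm{s}}_1,\alpha^{\mathrm{s}}_2$ terms land in the symmetric sector, the $\alpha^{\mathrm{a_{12}}}_3$ term in the antisymmetric sector, and the $\alpha^{\mathrm{mix}}_3$ term in the mixed sector), and that $H_0(0)$, being multiplication by the fully symmetric function $\mathcal{E}_0$, preserves each sector. I would therefore present the proof in three short sentences: cite Theorem~\ref{Theor:InvariantSub} for the orthogonal splitting, cite Lemma~\ref{invariance} (or equivalently Lemma~\ref{lem:V-decompos-invariance}) for the reduction of $H_{\mu_1\mu_2}(0)$, and conclude by the definition of a direct sum of operators that the block form holds, with blocks as named in \eqref{eq:H-restrictions_2}.
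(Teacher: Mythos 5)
Your proposal is correct and follows essentially the same route as the paper, which simply notes that the corollary follows from standard results on decomposing a bounded operator with respect to invariant orthogonal subspaces; you have merely spelled out that standard argument using Theorem~\ref{Theor:InvariantSub}, Lemma~\ref{invariance}, and the definitions in \eqref{eq:H-restrictions_2}. No gap.
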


\begin{corollary}\label{corollary_full_spectr}
The spectrum of the operator $H_{\mu_1\mu_2}(0)$ satisfies the equality
\begin{equation*}
\sigma\bigl(H_{\mu_1\mu_2}(0)\bigr)
=
\sigma\left( H^{\mathrm{s}}_{\mu_1\mu_2}(0) \right)
\cup 
\sigma\left( H^{\mathrm{a_{12}}}_{\mu_2}(0) \right)
\cup 
\sigma\left( H^{\mathrm{mix}}_{\mu_2}(0) \right).
\end{equation*}
Hence, the spectral analysis of $H_{\mu_1\mu_2}(0)$ reduces to the independent study of its restrictions to the subspaces 
$ L^{2,\mathrm{e,s}}(\mathbb{T}^3) $, $ L^{2,\mathrm{e,a_{12}}}(\mathbb{T}^3) $, and $ L^{2,\mathrm{e,mix}}(\mathbb{T}^3) $.
\end{corollary}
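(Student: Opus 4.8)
The plan is to derive the spectral identity directly from the orthogonal block decomposition already recorded in Corollary~\ref{cor:operator_decomposition}, together with the reduction property established in Lemma~\ref{invariance}, using only the elementary fact that the spectrum of a \emph{finite} orthogonal direct sum of bounded operators equals the union of the spectra of the summands.

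First I would recall the setup: by Lemma~\ref{invariance} the three closed subspaces $L^{2,\mathrm{e,s}}(\T^3)$, $L^{2,\mathrm{e,a_{12}}}(\T^3)$, $L^{2,\mathrm{e,mix}}(\T^3)$ reduce $H_{\mu_1\mu_2}(0)$, so that — as in Corollary~\ref{cor:operator_decomposition} — one has the genuine block-diagonal representation
\[
H_{\mu_1\mu_2}(0)=H^{\mathrm{s}}_{\mu_1\mu_2}(0)\oplus H^{\mathrm{a_{12}}}_{\mu_2}(0)\oplus H^{\mathrm{mix}}_{\mu_2}(0),
\]
with no off-diagonal coupling, each summand being a bounded self-adjoint operator on the corresponding subspace. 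Writing $A_1,A_2,A_3$ for these three restrictions, I would then establish the spectral union formula by a resolvent argument. If $\lambda\in\C$ satisfies $\lambda\notin\sigma(A_j)$ for every $j$, then each $A_j-\lambda$ has a bounded inverse $R_j(\lambda)$ on its subspace, and $R_1(\lambda)\oplus R_2(\lambda)\oplus R_3(\lambda)$ is bounded (the operator norm of a direct sum of finitely many bounded operators is the maximum of their norms) and inverts $H_{\mu_1\mu_2}(0)-\lambda$; hence $\lambda\notin\sigma(H_{\mu_1\mu_2}(0))$. Conversely, if $\lambda\notin\sigma(H_{\mu_1\mu_2}(0))$, the bounded inverse $(H_{\mu_1\mu_2}(0)-\lambda)^{-1}$ commutes with the orthogonal projections onto the reducing subspaces, so its restriction to the $j$-th subspace is a bounded two-sided inverse of $A_j-\lambda$, giving $\lambda\notin\sigma(A_j)$ for each $j$. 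Passing to complements yields
\[
\sigma\bigl(H_{\mu_1\mu_2}(0)\bigr)=\sigma\bigl(H^{\mathrm{s}}_{\mu_1\mu_2}(0)\bigr)\cup\sigma\bigl(H^{\mathrm{a_{12}}}_{\mu_2}(0)\bigr)\cup\sigma\bigl(H^{\mathrm{mix}}_{\mu_2}(0)\bigr).
\]

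There is essentially no serious obstacle here; the only point deserving a word of care is that the argument relies on the \emph{finiteness} of the direct sum, so that boundedness of the three resolvents is automatically inherited by their direct sum with no need of a uniform bound (for an infinite orthogonal sum one would instead require $\sup_j\|R_j(\lambda)\|<\infty$, but that situation does not occur). The closing assertion — that the spectral analysis of $H_{\mu_1\mu_2}(0)$ reduces to the independent study of the three restrictions — is then immediate from this identity, and, combined with Proposition~\ref{prop:ess-spectrum} and Corollary~\ref{cor:location-discrete}, the same block structure propagates to the essential and discrete parts of the spectrum, so that eigenvalue counting below and above the band may be carried out separately on each invariant subspace.
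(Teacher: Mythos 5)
Your argument is correct and is exactly the standard fact the paper invokes: the authors dispose of this corollary (together with Corollary~\ref{cor:operator_decomposition}) in one line by appealing to ``standard results on the decomposition of bounded linear operators with respect to invariant orthogonal subspaces,'' and your resolvent argument is precisely the content of that standard result, spelled out. No gap; you have simply made explicit what the paper leaves implicit.
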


The statements of Corollaries \ref{cor:operator_decomposition} and \ref{corollary_full_spectr} follow directly from standard results on the decomposition of bounded linear operators with respect to invariant orthogonal subspaces.

Now we will study the spectrum of the operators $H^{\mathrm{s}}_{\mu_1\mu_2}(0)$ and 
$H^{\theta}_{\mu_2}(0),\,\theta\in\{\mathrm{a_{12}}, \mathrm{mix}\}$.

\subsection{Lippmann--Schwinger operator and eigenvalue determinants}

\begin{definition}[Transpose Lippmann--Schwinger operator]\label{def:LS_operator}
For $z \in \mathbb{C} \setminus [0,24]$, the \emph{transpose} Lippmann--Schwinger operators associated with the symmetric and 
one-parameter interaction potentials are defined by
\begin{displaymath}
B^{\mathrm{s}}_{\mu_1\mu_2}(0,z)
   := - V^{\mathrm{s}}_{\mu_1\mu_2}\, R_0(0,z), 
   \qquad
B^{\theta}_{\mu_2}(0,z)
   := - V^{\theta}_{\mu_2}\, R_0(0,z),
   \quad \theta\in\{\mathrm{a_{12}},\mathrm{mix}\},
\end{displaymath}
where 
\begin{displaymath}
R_0(0,z) := (H_0(0)-zI)^{-1}
\end{displaymath}
is the free resolvent of the lattice Schr\"{o}dinger operator (cf.\ the classical Lippmann--Schwinger construction \cite{LSchwinger:1950}).  
The terminology “transpose” reflects that the operators above act on the left of the free resolvent, in contrast to the conventional Lippmann--Schwinger form $R_0 V$.
\end{definition}

\begin{lemma}[Integral representation and determinant criterion]\label{lem:LS_and_det}
Let $f\in L^{2,\mathrm{e}}\\ (\mathbb{T}^3)$ and $z\in\mathbb{C}\setminus[0,24]$.  
Then the operators $B^{\mathrm{s}}_{\mu_1\mu_2}$ and $B^{\theta}_{\mu_2}$ admit the integral representations
\begin{align}
(B^{\mathrm{s}}_{\mu_1\mu_2}f)(p)
&= -\mu_1 \int_{\mathbb{T}^3}\frac{\alpha^{\mathrm{s}}_1(p)
\alpha^{\mathrm{s}}_1(q)f(q)}{\mathcal{E}_0(q)-z} \mathrm{d}q-\mu_2 \int_{\mathbb{T}^3}\frac{\alpha^{\mathrm{s}}_2(p)
\alpha^{\mathrm{s}}_2(q)f(q)}{\mathcal{E}_0(q)-z}\mathrm{d}q, 
\label{B_s_integral2}
\\
(B^{\theta}_{\mu_2}f)(p)
&= -\mu_2 \int_{\mathbb{T}^3}
\frac{\alpha^{\theta}_3(p)\alpha^{\theta}_3(q)\, f(q)}{\mathcal{E}_0(q)-z}\, \mathrm{d}q,
\qquad \theta\in\{\mathrm{a_{12}},\mathrm{mix}\},
\label{B_theta_integral2}
\end{align}
where the functions $\alpha^{\mathrm{s}}_i$, $i=1,2$, and $\alpha^{\theta}_3$, 
$\theta\in\{\mathrm{a_{12}},\mathrm{mix}\}$, are given in Lemma \ref{lem:orth-basis}.

Moreover, $z$ is an eigenvalue of $H^{\mathrm{s}}_{\mu_1\mu_2}(0)$ if and only if
\begin{displaymath}
\Delta^{\mathrm{s}}_{\mu_1\mu_2}(z)
:=\det\bigl[I - B^{\mathrm{s}}_{\mu_1\mu_2}(0,z)\bigr]
= 
\begin{vmatrix}
1+\mu_1 a^{\mathrm{s}}_{11}(z) & \mu_2 a^{\mathrm{s}}_{12}(z) \\
\mu_1 a^{\mathrm{s}}_{12}(z) & 1+\mu_2 a^{\mathrm{s}}_{22}(z)
\end{vmatrix}
= 0.
\end{displaymath}
Similarly, $z$ is an eigenvalue of $H^{\theta}_{\mu_2}(0)$, $\theta\in\{\mathrm{a_{12}},\mathrm{mix}\}$, if and only if
\begin{displaymath}
\Delta^{\theta}_{\mu_2}(z)
:= \det\bigl[I - B^{\theta}_{\mu_2}(0,z)\bigr]
= 1+\mu_2 a^{\theta}(z) = 0.
\end{displaymath}

Here
\begin{equation}\label{a_{ij}}
a^{\mathrm{s}}_{ij}(z)
= \int_{\mathbb{T}^3}\frac{\alpha^{\mathrm{s}}_i(p)
\alpha^{\mathrm{s}}_j(p)}{\mathcal{E}_0(p)-z}\mathrm{d}p,
\qquad
a^{\theta}(z)
= \int_{\mathbb{T}^3}\frac{\bigl(\alpha^{\theta}_3(p)\bigr)^2}{\mathcal{E}_0(p)-z}\mathrm{d}p,
\end{equation}
and the function $a^{\theta}(\cdot)$ is independent of $\theta$.
\end{lemma}

\begin{proof}The integral representations \eqref{B_s_integral2}--\eqref{B_theta_integral2} follow directly from Lemmas \ref{lem:orth-basis} and \ref{lem:V-decompos-invariance}. 

For eigenvalues, the Lippmann--Schwinger equation
\begin{displaymath}
[I-B]\varphi=0
\end{displaymath}
reduces to a $2\times 2$ system in the symmetric subspace and a scalar equation in the rank-one subspaces, giving the determinant formulas.  

Equality $ a^{\mathrm{a_{12}}}(z) = a^{\mathrm{mix}}(z) =: a(z) $ follows by expanding
\begin{displaymath}
(\cos p_1 - \cos p_2)^2 \quad\text{and}\quad (\cos p_1 + \cos p_2 - 2\cos p_3)^2
\end{displaymath}
and using fully symmetric property of $\mathcal{E}_0(p)$.
\end{proof}

\begin{proposition}\label{lem:prop_function}
Let $a^{\mathrm{s}}_{ij}(z)$, $i,j=1,2$, and 
$a(z):=a^{\theta}(z)$, $\theta\in\{\mathrm{a}_{12},\mathrm{mix}\}$, be defined by
\eqref{a_{ij}}. Then the following statements hold:

\begin{enumerate}
\item[(a)] 
All functions $a^{\mathrm{s}}_{ij}(z)=a^{\mathrm{s}}_{ji}(z)$ and 
$a(z)$ are real-valued and analytic on 
$\mathbb{R}\setminus[0,24]$.

\item[(b)]  
They are strictly increasing on each of the intervals 
$(-\infty,0]$ and $[24,+\infty)$, satisfy
\begin{displaymath}
a^{\mathrm{s}}_{ij}(z)>0,\qquad a(z)>0,\qquad z\leq 0,
\end{displaymath}
and
\begin{displaymath}
a^{\mathrm{s}}_{ij}(z)<0,\qquad a(z)<0,\qquad z\geq 24.
\end{displaymath}

\item[(c)]  
The following asymptotic expansions hold:
\begin{displaymath}
a^{\mathrm{s}}_{ij}(z)=
\begin{cases}
a^{\mathrm{s}}_{ij}(0)+O(\sqrt{-z}), & z\nearrow 0,\\
(-1)^{i+j+1}\!\bigl(a^{\mathrm{s}}_{ij}(0)+O(\sqrt{z-24})\bigr),
& z\searrow 24,
\end{cases}
\end{displaymath}
and
\begin{displaymath}
a(z)=
\begin{cases}
a(0)+O(-z), & z\nearrow 0,\\
-a(0)+O(z-24), & z\searrow 24.
\end{cases}
\end{displaymath}

\item[(d)]  
The functions $a^{\mathrm{s}}_{ij}(z)$ satisfy, for all $z\in\mathbb{R}\setminus(0,24)$,
\begin{displaymath}
a^{\mathrm{s}}_{12}(z)=\frac{12-z}{2\sqrt{6}}\,a^{\mathrm{s}}_{11}(z)
-\frac{1}{2\sqrt{6}},
\qquad
a^{\mathrm{s}}_{22}(z)=\frac{12-z}{2\sqrt{6}}\,a^{\mathrm{s}}_{12}(z).
\end{displaymath}

\item[(e)]  
The inequality
\begin{displaymath}
a^{\mathrm{s}}_{11}(0)>\frac{11}{102}
\end{displaymath}
holds.
\end{enumerate}
\end{proposition}

\begin{proof}
\textbf{(a).}
All functions under consideration are of the form
\begin{displaymath}
\int_{\mathbb{T}^{3}}
\frac{g(p)}{\mathcal{E}_{0}(p)-z}\,\mathrm{d}p,
\end{displaymath}
where $g$ is a trigonometric polynomial.  
Since $\mathcal{E}_{0}(p)-z\neq 0$ for $z\notin[0,24]$, the integrand is analytic in $z$, uniformly in $p$.  
Analyticity follows from dominated convergence.

\medskip
\textbf{(b).}
We prove the statements for $a^{\mathrm{s}}_{12}(z)$, $z\leq 0$; the other cases are identical.

Let $z_{1}<z_{2}\leq 0$. Then
\begin{displaymath}
a^{\mathrm{s}}_{12}(z_2)-a^{\mathrm{s}}_{12}(z_1)
=(z_2-z_1)\int_{\mathbb{T}^{3}}
\frac{3\cos p_{1}}
{4\sqrt{6}\pi^{3}
(\mathcal{E}_{0}(p)-z_1)(\mathcal{E}_{0}(p)-z_2)}\mathrm{d}p.
\end{displaymath}
Write $u=12-4\cos p_{2}-4\cos p_{3}$ and integrate first in $p_{1}$.  
A direct computation shows that
\begin{align*}
&\int_{-\pi}^{\pi}
\frac{\cos p_{1}}
{(u-z_1-4\cos p_1)(u-z_2-4\cos p_1)}\mathrm{d}p_1\\
&= \int_{0}^{\pi}\frac{4\cos^{2}p_1(2u-z_1-z_2)}{((u-z_1)^{2}-16\cos^{2}p_1)((u-z_2)^{2}-16\cos^{2}p_1)}\mathrm{d}p_1.
\end{align*}
Since $u-z_{i}\geq 4$ for $z_i\leq 0$, the integrand above is positive, implying that
$a^{\mathrm{s}}_{12}(z)$ is strictly increasing on $(-\infty,0]$.  
The sign of $a^{\mathrm{s}}_{12}(z)$ for $z\leq 0$ follows in the same way.

\textbf{(c).}
We present the proof for $a^{\mathrm{s}}_{11}(z)$ as $z\nearrow 0$.  
The other cases are similar.

Split the integral into a neighborhood of the unique minimum of $\mathcal{E}_{0}$,
\begin{displaymath}
a^{(1)}_{11}(z)=\frac{1}{8\pi^{3}}\int_{U_{\delta}(0)}
\frac{\mathrm{d}p}{\mathcal{E}_{0}(p)-z},
\quad
a^{(2)}_{11}(z)=\frac{1}{8\pi^{3}}\int_{\mathbb{T}^{3}\setminus U_{\delta}(0)}
\frac{\mathrm{d}p}{\mathcal{E}_{0}(p)-z}.
\end{displaymath}
Since $\mathcal{E}_{0}$ has a non-degenerate minimum at $0$,  
$a^{(1)}_{11}(z)$ gives the non-analytic term $O(\sqrt{-z})$,  
while $a^{(2)}_{11}(z)$ is analytic near $z=0$.  
This is a standard argument (see, e.g., Lemma 3 in \cite{Lakaev:1992}).  
The expansions at $z=24$ follow similarly from a neighborhood of the maximum.

\textbf{(d).}
From the definitions of $\alpha^{\mathrm{s}}_{1}$ and $\alpha^{\mathrm{s}}_{2}$,
\begin{displaymath}
\alpha^{\mathrm{s}}_{2}(p)=\frac{12-\mathcal{E}_{0}(p)}{2\sqrt{6}}\,
\alpha^{\mathrm{s}}_{1}(p).
\end{displaymath}
Multiplying, integrating, and using the orthogonality yields the stated formulas.

\textbf{(e).}
The inequality
$a^{\mathrm{s}}_{11}(0)>\frac{11}{102}$
is established in Lemma 6 of \cite{ALMM:2006}.
\end{proof}
\begin{lemma}\label{lem:polynomial_A_pm}
Let $\mu_1, \mu_2 \in \mathbb{R}$.  
The functions $\Delta^{\mathrm{s}}_{\mu_1\mu_2}(z)$ and 
$\Delta_{\mu_2}(z):=\Delta^{\theta}_{\mu_2}(z)$ are real-valued and analytic on 
$z \in \mathbb{R}\setminus[0,24]$. Moreover, the following asymptotic properties hold:

\begin{enumerate}
\item[(i)] 
\begin{displaymath}
\lim_{|z|\to \infty} \Delta^{\mathrm{s}}_{\mu_1\mu_2}(z) = 1, 
\qquad 
\lim_{|z|\to \infty} \Delta_{\mu_2}(z) = 1.
\end{displaymath}

\item[(ii)] 
As $z$ approaches the edges of the essential spectrum, we have
\begin{displaymath}
\Delta^{\mathrm{s}}_{\mu_1 \mu_2}(z) =
\begin{cases}
A^{-}(\mu_1,\mu_2) + O(\sqrt{-z}), & z \nearrow 0,\\
A^{+}(\mu_1,\mu_2) + O(\sqrt{z-24}), & z \searrow 24,
\end{cases}
\end{displaymath}
where
\begin{equation}\label{def:polynomial_A_pm}
A^{\mp}(\mu_1,\mu_2) 
:= 1 \pm \bigl(a^{\mathrm{s}}_{11}(0)\mu_1 + a^{\mathrm{s}}_{22}(0)\mu_2\bigr) 
+ \bigl(a^{\mathrm{s}}_{11}(0)a^{\mathrm{s}}_{22}(0) - (a^{\mathrm{s}}_{12}(0))^2\bigr)\mu_1\mu_2.
\end{equation}

\item[(iii)] 
Similarly, for the single-parameter determinant,
\begin{displaymath}
\Delta_{\mu_2}(z) =
\begin{cases}
1 + \mu_2\, a(0) + O(-z), & z \nearrow 0,\\
1 - \mu_2\, a(0) + O(z-24), & z \searrow 24,
\end{cases}
\end{displaymath}
where $a(0)=a^{\mathrm{a}_{12}}(0)=a^{\mathrm{mix}}(0)$.
\end{enumerate}
\end{lemma}

\begin{proof}
Proposition \ref{lem:prop_function},(a) implies the functions 
$\Delta^{\mathrm{s}}_{\mu_1\mu_2}(z)$ and $\Delta_{\mu_2}(z)$ are real-valued and analytic on 
$\mathbb{R}\setminus[0,24]$.
Moreover, since $a^{\mathrm{s}}_{ij}(z),\, a(z) = O(1/|z|)$ as $|z|\to\infty$, we obtain
\begin{displaymath}
\Delta^{\mathrm{s}}_{\mu_1\mu_2}(z)\to 1, 
\qquad 
\Delta_{\mu_2}(z)\to 1.
\end{displaymath}

\medskip
The proof of items (ii) and (iii) follows directly from Proposition \ref{lem:prop_function},(c) and the definitions of $\Delta^{\mathrm{s}}_{\mu_1\mu_2}$ and $\Delta^{\theta}_{\mu_2}$.
\end{proof}

\section{The summary of the key findings and supporting lemmas}\label{sec:MainResults}

\begin{lemma}\label{lem:regions_A_pm} 
Define
\begin{displaymath}
\mu_2^{-}(\mu_1)=\frac{24}{\mu_1+12}-\mu_0,\qquad \mu_1\neq -12,
\end{displaymath}
\begin{displaymath}
\mu_2^{+}(\mu_1)=\frac{24}{\mu_1-12}+\mu_0,\qquad \mu_1\neq 12,
\end{displaymath}
where
\begin{displaymath}
\mu_0=\frac{24\,a^{\mathrm s}_{11}(0)}{12\,a^{\mathrm s}_{11}(0)-1},
\qquad 
12 a^{\mathrm s}_{11}(0)-1>0.
\end{displaymath}
Then the zero sets of the polynomials $A^{-}(\mu_1,\mu_2)$ and
$A^{+}(\mu_1,\mu_2)$ coincide with the graphs of the functions
$\mu_2^{-}(\mu_1)$ and $\mu_2^{+}(\mu_1)$, respectively.

Each graph is a smooth hyperbola with two unbounded, connected
components:
\begin{displaymath}
\tau^{-}_0=\{(\mu_1,\mu_2):\mu_1>-12,\ \mu_2=\mu_2^{-}(\mu_1)\},\,
\tau^{-}_1=\{(\mu_1,\mu_2):\mu_1<-12,\ \mu_2=\mu_2^{-}(\mu_1)\},
\end{displaymath}
\begin{displaymath}
\tau^{+}_0=\{(\mu_1,\mu_2):\mu_1<12,\ \mu_2=\mu_2^{+}(\mu_1)\},\quad
\tau^{+}_1=\{(\mu_1,\mu_2):\mu_1>12,\ \mu_2=\mu_2^{+}(\mu_1)\}.
\end{displaymath}
Moreover, each pair $(\tau_0^{-},\tau_1^{-})$ divides the
$(\mu_1,\mu_2)$-plane into three open, unbounded, connected regions:
\begin{align*}
\mathcal{A}_0^{-}=&\{(\mu_1,\mu_2):\mu_1>-12,\ \mu_2>\mu_2^{-}(\mu_1)\},
\\\nonumber
\mathcal{A}_1^{-}=&
\{(\mu_1,\mu_2):\mu_1>-12,\ \mu_2<\mu_2^{-}(\mu_1)\}
\cup\{(\mu_1,\mu_2):\mu_1<-12,\ \mu_2>\mu_2^{-}(\mu_1)\}
\cup\\\nonumber
&\{(-12,\mu_2):\mu_2\in\mathbb{R}\},
\nonumber\\
\mathcal{A}_2^{-}=&\{(\mu_1,\mu_2):\mu_1<-12,\ \mu_2<\mu_2^{-}(\mu_1)\}.
\nonumber
\end{align*}
Analogously, each pair $(\tau_0^{+},\tau_1^{+})$ divides the
$(\mu_1,\mu_2)$-plane into three open, unbounded, connected regions:
\begin{align*}
\mathcal{A}_0^{+}=&\{(\mu_1,\mu_2):\mu_1<12,\ \mu_2<\mu_2^{+}(\mu_1)\},
\label{def:regions_A_plus}\\\nonumber
\mathcal{A}_1^{+}=&
\{(\mu_1,\mu_2):\mu_1<12,\ \mu_2>\mu_2^{+}(\mu_1)\}
\cup\{(\mu_1,\mu_2):\mu_1>12,\ \mu_2<\mu_2^{+}(\mu_1)\}
\cup\\\nonumber
&\{(12,\mu_2):\mu_2\in\mathbb{R}\},
\nonumber\\
\mathcal{A}_2^{+}=&\{(\mu_1,\mu_2):\mu_1>12,\ \mu_2>\mu_2^{+}(\mu_1)\}.
\nonumber
\end{align*}
Hence the triples $(\mathcal{A}_0^{\mp},\mathcal{A}_1^{\mp},\mathcal{A}_2^{\mp})$ ar pairwise disjoint and, together with $\tau^{\mp}_0$ and $\tau^{\mp}_1$, cover $\mathbb{R}^2$.
\end{lemma}

\begin{proof}[Proof of Lemma \ref{lem:regions_A_pm}]
We treat the "$-$" case; the "$+$" case is identical.
\textit{Zero set of $A^{-}(\mu_1,\mu_2)$.}
Using Proposition \ref{lem:prop_function},(d), the polynomial $A^{-}(\mu_1,\mu_2)$ given by \eqref{def:polynomial_A_pm} can be rewritten as
\begin{equation}\label{polynomial_A}
A^{-}(\mu_1,\mu_2)
=\frac{a^{\mathrm{s}}_{11}(0)}{\mu_0}\bigl[(\mu_2+\mu_0)(\mu_1+12)-24\bigr],
\qquad 
\mu_0=\frac{24\,a^{\mathrm{s}}_{11}(0)}{12\,a^{\mathrm{s}}_{11}(0)-1}.
\end{equation}
Since $a^{\mathrm{s}}_{11}(0)>\frac{11}{102}$ (Proposition  \ref{lem:prop_function},(e)),
we have $\mu_0>2$, and thus $A^{-}(\mu_1,\mu_2)=0$ is equivalent to
\begin{equation}\label{eq:hyperbola}
(\mu_1+12)(\mu_2+\mu_0)=24.
\end{equation}
Solving \eqref{eq:hyperbola} for $\mu_2$ gives
\begin{displaymath}
\mu_2=\mu_2^{-}(\mu_1)=\frac{24}{\mu_1+12}-\mu_0,
\qquad \mu_1\neq -12.
\end{displaymath}
Hence the zero set of $A^{-}$ is precisely the graph of $\mu_2^{-}(\mu_1)$.
This hyperbola has two unbounded, connected components $\tau_0^{-}$ and $\tau_1^{-}$,
and it divides $\mathbb{R}^2$ into three open, unbounded, connected regions.

Inspecting the sign of the expression in \eqref{polynomial_A} in each
region shows that these three regions are exactly
$\mathcal{A}_0^{-}$, $\mathcal{A}_1^{-}$, and $\mathcal{A}_2^{-}$.
Their union is
\begin{displaymath}
\mathbb{R}^2 \setminus (\tau_0^{-}\cup\tau_1^{-}),
\end{displaymath}
and they are mutually disjoint. 
\end{proof}

\begin{lemma}\label{lem:regions_B}
Let
\begin{displaymath}
\mu_2^{(0)}=\frac{1}{a(0)}.
\end{displaymath}
The point $-\mu_2^{(0)}$ divides $\mu_2$-axis into two disjoint open intervals:
\begin{displaymath}
\mathcal B_0^-=\{\mu_2>-\,\mu_2^{(0)}\},\qquad
\mathcal B_1^-=\{\mu_2<-\,\mu_2^{(0)}\},
\end{displaymath}
and the point $\mu_2^{(0)}$ divides it into
\begin{displaymath}
\mathcal B_0^+=\{\mu_2<\mu_2^{(0)}\},\qquad
\mathcal B_1^+=\{\mu_2>\mu_2^{(0)}\}.
\end{displaymath}
\end{lemma}

\begin{proof}
By Proposition \ref{lem:prop_function},(b),
$\mu_2^{(0)}=1/a(0)$ is finite and nonzero.  
Thus, $-\mu_2^{(0)}$ divides $\mathbb{R}$ into two open intervals,
which correspond to $\mathcal B_0^{-}$ and $\mathcal B_1^{-}$.
Likewise, $\mu_2^{(0)}$ divides $\mathbb{R}$ into the open intervals
defining $\mathcal B_0^{+}$ and $\mathcal B_1^{+}$.
\end{proof}

\begin{theorem}\label{theo:constant_1}
Let $(\mu_1,\mu_2)\in\mathcal{A}$, where $\mathcal{A}=\mathcal{A}_{\zeta}^{-}$
(resp.\ $\mathcal{A}=\mathcal{A}_{\zeta}^{+}$) for some $\zeta\in\{0,1,2\}$.
Then $H^{\mathrm{s}}_{\mu_1\mu_2}(0)$ has exactly $\zeta$ eigenvalues, counted
with multiplicity, lying below (resp.\ above) its essential spectrum
$\sigma_{\mathrm{ess}}(H_{\mu_1\mu_2}(0))$.

For any such eigenvalue $z=z(\mu_1,\mu_2)$, an eigenfunction can be chosen as
\begin{equation}\label{rep_eigenfunction}
f^{\mathrm{s}}_{\mu_1\mu_2}(p)
= C \left(
\mu_1\mu_2\, a^{\mathrm{s}}_{12}(z)
\frac{\alpha^{\mathrm{s}}_1(p)}{\mathcal{E}_0(p)-z}
-
\mu_2\bigl(1+\mu_1 a^{\mathrm{s}}_{11}(z)\bigr)
\frac{\alpha^{\mathrm{s}}_2(p)}{\mathcal{E}_0(p)-z}
\right),
\end{equation}
where $C\neq0$ is a normalization constant.
\end{theorem}

\begin{theorem}\label{theo:constant_2}
Let $\mu_2\in\mathbb{R}$, $\zeta\in\{0,1\}$ and
$\theta\in\{\mathrm{a}_{12},\mathrm{mix}\}$.
If $\mu_2\in\mathcal{B}_{\zeta}^{-}$ (resp.\ $\mu_2\in\mathcal{B}_{\zeta}^{+}$),
then $H^{\theta}_{\mu_2}(0)$ has exactly $\zeta$ eigenvalues lying strictly
below (resp.\ above) its essential spectrum.

Each such eigenvalue $z=z(\mu_2)$ admits an eigenfunction of the form
\begin{displaymath}
f_{\mu_2}^{\theta}(p)
= -\frac{\mu_2\,c\,\alpha_{3}^{\theta}(p)}
{\mathcal{E}_{0}(p)-z},
\end{displaymath}
where $c$ is a normalization constant.
\end{theorem}

\begin{remark}
We consider the operator $ H_{\mu_1\mu_2}(0) $, initially defined on 
$ L^{2,\mathrm{e}}(\mathbb{T}^3) $, as acting on the Banach space 
$ L^{1,\mathrm{e}}(\mathbb{T}^3) $ via
\begin{displaymath}
(H_{\mu_1\mu_2}(0)f)(p)
   = (H_{0}(0)f)(p) + (V_{\mu_1\mu_2}f)(p).
\end{displaymath}
The free part satisfies the estimate
\begin{displaymath}
\|H_0(0)f\|_{L^{1,\mathrm{e}}(\mathbb{T}^3)}
   \le 24\, \|f\|_{L^{1,\mathrm{e}}(\mathbb{T}^3)},
\end{displaymath}
while the interaction term obeys
\begin{displaymath}
\|V_{\mu_1\mu_2}f\|_{L^{1,\mathrm{e}}(\mathbb{T}^3)}
   \le \bigl(|\mu_1| + 6|\mu_2|\bigr)\,
      \|f\|_{L^{1,\mathrm{e}}(\mathbb{T}^3)}.
\end{displaymath}
Hence,
\begin{displaymath}
\|H_{\mu_1\mu_2}(0)f\|_{L^{1,\mathrm{e}}(\mathbb{T}^3)}
   \le \bigl(24 + |\mu_1| + 6|\mu_2|\bigr)\,
      \|f\|_{L^{1,\mathrm{e}}(\mathbb{T}^3)}.
\end{displaymath}
 Therefore, $ H_{\mu_1\mu_2}(0) $ is a bounded linear 
operator defined on $ L^{1,\mathrm{e}}(\mathbb{T}^3) $.
\end{remark}

\begin{definition}[Threshold resonance]
Let $(\mu_{1},\mu_{2})\in\mathbb{R}^{2}$.  
We say that the lower edge $\mathcal{E}_{\min}(0)$ of the essential spectrum is a 
\emph{threshold resonance} of the operator $H_{\mu_{1}\mu_{2}}(0)$ if the equation
\begin{displaymath}
H_{\mu_{1}\mu_{2}}(0) f(p) = \mathcal{E}_{\min}(0)\, f(p)
\end{displaymath}
admits a non-trivial solution
\begin{displaymath}
f \in L^{1,\mathrm{e}}(\mathbb{T}^{3}) 
\setminus L^{2,\mathrm{e}}(\mathbb{T}^{3}).
\end{displaymath}
Any such function $f$ is called a \emph{threshold resonance state}.
  
The notion of an \emph{upper threshold resonance} at $\mathcal{E}_{\max}(0)$ is defined analogously.
\end{definition}

\begin{remark}
A threshold resonance represents an intermediate spectral phenomenon between a
genuine eigenvalue and a regular point of the continuous spectrum.  
If the threshold equation admits a non-trivial solution in
$L^{1,\mathrm{e}}(\mathbb{T}^{3})$ that does not belong to
$L^{2,\mathrm{e}}(\mathbb{T}^{3})$, then the threshold $\mathcal{E}_{\min}(0)$
is "almost" an eigenvalue: the system is on the verge of creating a bound
state at the bottom of the essential spectrum.  
Such resonances act as precursors to the emergence (or disappearance) of
near-threshold eigenvalues under arbitrarily small parameter perturbations, and
they play an essential role in the analysis of Efimov-type effects and the
behavior of spectral counting functions near the threshold.
\end{remark}

The following theorem demonstrates that the number of eigenvalues located below the essential spectrum of the Hamiltonian $H^{\mathrm{s}} _{\mu_1\mu_2}(0)$ exhibits abrupt changes along the critical curves $\tau_0^-$ and $\tau_1^-$ and on these curves, the operator has a threshold resonance.
An analogous statement holds for eigenvalues above the essential spectrum with respect to $\tau^{+}_0$ and $\tau^{+}_1$.

\begin{theorem}\label{theo:resonance}
Let $(\mu_1^0,\mu_2^0)\in \tau_i^{-}, \,i=0,1$ and $n_{-}\!\bigl(H^{\mathrm{s}}_{\mu_1\mu_2}(0)\bigr)$ be the number of eigenvalues of $H^{\mathrm{s}}_{\mu_1\mu_2}(0)$ below the essential spectrum. Then
\begin{displaymath}
\lim_{\mathcal{A}_i^{-}\ni(\mu_1,\mu_2)\to(\mu_1^0,\mu_2^0)}
n_{-}\bigl(H^{\mathrm{s}}_{\mu_1\mu_2}(0)\bigr)=i, 
\qquad
\lim_{\mathcal{A}_{i+1}^{-}\ni(\mu_1,\mu_2)\to(\mu_1^0,\mu_2^0)}
n_{-}\bigl(H^{\mathrm{s}}_{\mu_1\mu_2}(0)\bigr)=i+1,
\end{displaymath}
and 
\begin{displaymath}
n_{-}\bigl(H^{\mathrm{s}}_{\mu_1^0\mu_2^0}(0)\bigr)=i.
\end{displaymath}

Moreover, if $(\mu_1^0,\mu_2^0)\in \tau^{-}_{i},$ $i=0,1$, 
then the bottom of the essential spectrum $\mathcal{E}_{\min}(0)$ is a 
threshold resonance of the operator $H^{\mathrm{s}}_{\mu_1^0\mu_2^0}(0)$.
\end{theorem}

\begin{remark}
An entirely analogous phenomenon occurs for the one--parameter operators 
$H^{\theta}_{\mu_2}(0)$, $\theta\in\{\mathrm{a_{12}},\mathrm{mix}\}$. 
At the critical values $\mu_2=\pm \mu_2^{(0)}$, their eigenvalue counting 
functions exhibit the same continuity properties and the same one-sided 
jumps at the thresholds $z=0$ and $z=24$ as in the symmetric case. 
Thus, the threshold spectral behavior of $H^{\theta}_{\mu_2}(0)$ is 
fully parallel to that of $H^{\mathrm{s}}_{\mu_1\mu_2}(0)$.
\end{remark}

We define
\begin{align*}
\mathbb{A}_{0}^{\mp}&=\mathcal{A}_{0}^{\mp}\cup \tau^{\mp}_0,\quad \mathbb{A}_{2}^{\mp}=\mathcal{A}_{2}^{\mp},\quad
\mathbb{A}_{3}^{\mp}=\mathcal{A}_{1}^{\mp}\cap\{(0,\mu_2): \mu_2\in\mathcal{B}_{1}^{\mp}\},\\
\mathbb{A}_{1}^{\mp}
&=\left(\mathcal{A}_{1}^{\mp}\cup \tau^{\mp}_1\right)\setminus\mathbb{A}_{3}^{\mp}.
\end{align*}
\begin{figure}[h!]
   \centering
    \begin{subfigure}[b]{0.45\textwidth}
        \centering
 \includegraphics[width=\textwidth]{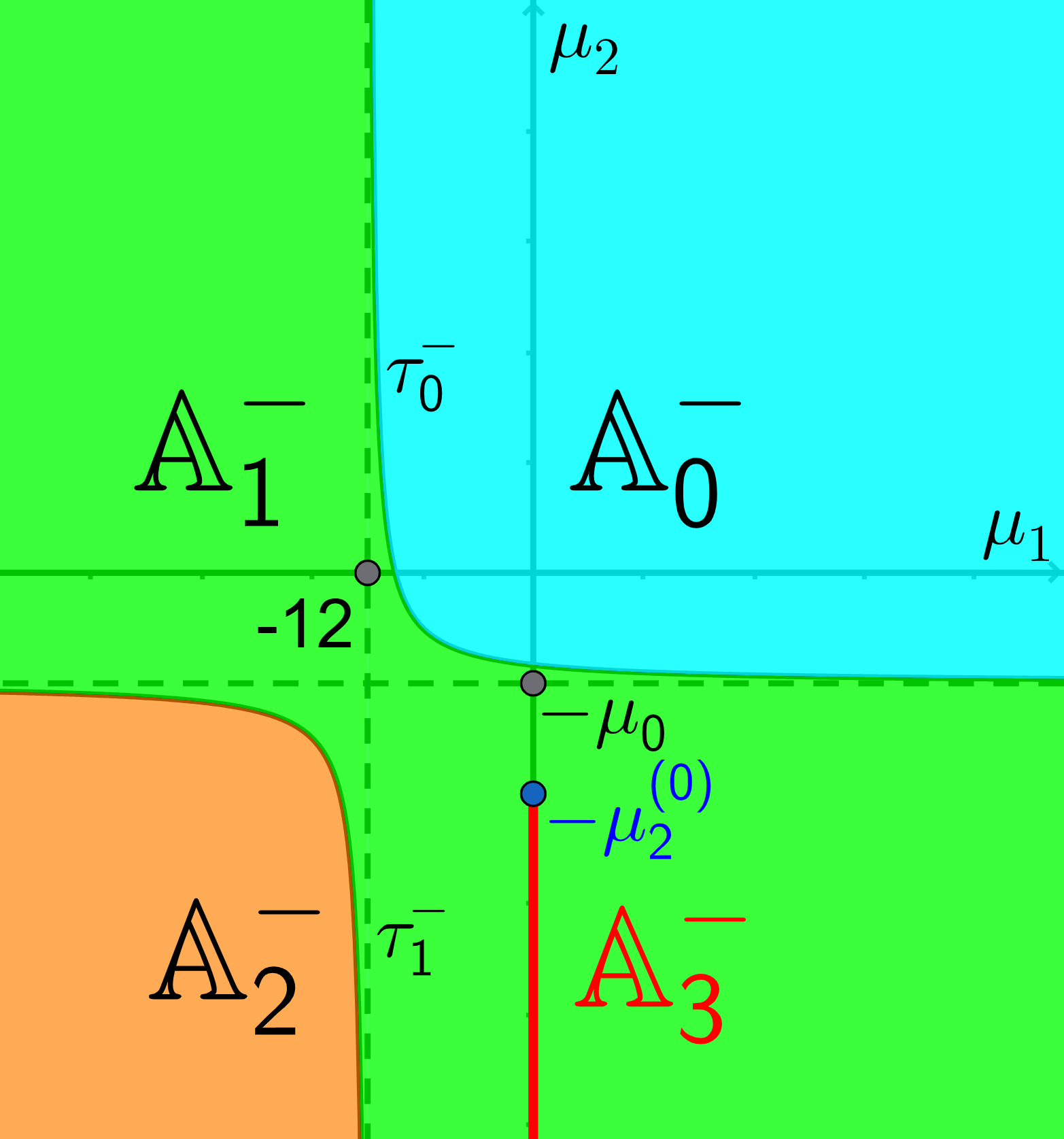}
       \caption{Partition of the $(\mu_1,\mu_2)$-plane by the curves $\tau_0^{-}$, $\tau_1^{-}$ and the point $(0,-\mu_2^{(0)})$.}
        \label{fig:rasm1}
    \end{subfigure}
    \hfill
    \begin{subfigure}[b]{0.45\textwidth}
       \centering
 \includegraphics[width=\textwidth]{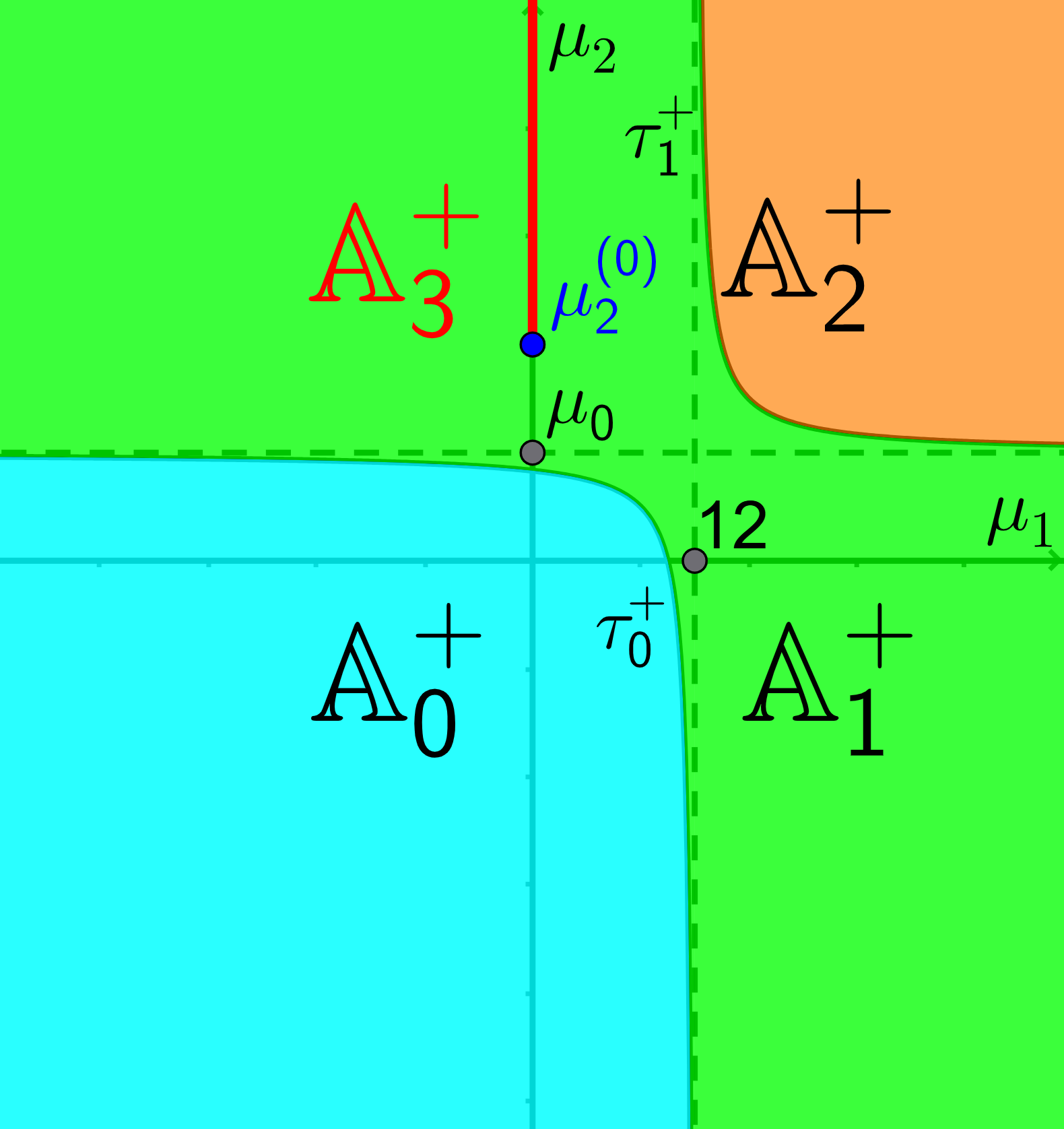}
       \caption{Partition of the $(\mu_1,\mu_2)$-plane by the curves $\tau_0^{+}$,$\tau_1^{+}$ and the point $(0,\mu_2^{(0)})$.}
        \label{fig:rasm2}
   \end{subfigure}
    \caption{}
   \label{fig:ikkala_rasm}
\end{figure} 

\begin{corollary}\label{theo:constant_3}
Let $\zeta\in\{0,1,2,3\}$.
The operator
$H_{\mu_1\mu_2}(0)$ has exactly $\zeta$ eigenvalues (counted with
multiplicity) lying below (resp.\ above) its essential spectrum if and only if $(\mu_1,\mu_2)\in\mathbb{A}_{\zeta}^{-}$
(resp.\ $(\mu_1,\mu_2)\in\mathbb{A}_{\zeta}^{+}$).
\end{corollary}

To determine the number of eigenvalues of the operator  
$H_{\mu_1 \mu_2}(0)$ lying on both sides of its essential spectrum,  
it is useful to analyze the geometric arrangement of the parameter regions  
$\mathcal{A}^{\mp}_{\alpha}$ for $\alpha \in \{0,1,2\}$ and  
$\mathcal{B}^{\mp}_{\beta}$ for $\beta \in \{0,1\}$.

\begin{lemma}\label{regions}
The following geometric relations hold for the regions 
$\mathcal{A}^{\mp}_{\alpha}$ and $\mathcal{B}^{\mp}_{\beta}$:

\begin{itemize}
\item[(i)] The inclusions
\begin{displaymath}
\mathcal{A}^{+}_2 \subset \mathcal{A}^{-}_0, 
\qquad 
\mathcal{A}^{-}_2 \subset \mathcal{A}^{+}_0,
\qquad 
\mathcal{B}^{+}_1 \subset \mathcal{B}^{-}_0, 
\qquad 
\mathcal{B}^{-}_1 \subset \mathcal{B}^{+}_0
\end{displaymath}
hold true.

\item[(ii)] The sets $\mathcal{A}^{-}_{\alpha}$ and $\mathcal{B}^{-}_{\beta}$ 
are symmetric with respect to the origin to the corresponding sets 
$\mathcal{A}^{+}_{\alpha}$ and $\mathcal{B}^{+}_{\beta}$:
\begin{displaymath}
\mathcal{A}^{-}_{\alpha} = -\mathcal{A}^{+}_{\alpha},
\qquad
\mathcal{B}^{-}_{\beta} = -\mathcal{B}^{+}_{\beta},
\end{displaymath}
where, for any set $S\subset\mathbb{R}^{2}$,
\begin{displaymath}
-S := \{\, -x : x\in S \,\}.
\end{displaymath}
\end{itemize}
\end{lemma}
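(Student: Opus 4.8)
The whole lemma rests on two sign facts about the constants $\mu_0,\mu_2^{(0)}$ and on one algebraic reflection identity linking the critical curves $\mu_2^{-}$ and $\mu_2^{+}$; once these are in place every inclusion and every equality is a short verification. First I would record that $\mu_0>0$ and $\mu_2^{(0)}>0$: by Proposition~\ref{lem:prop_function} the functions $a^{\mathrm{s}}_{11}$ and $a^{\mathrm{a_{12}}}$ are positive on $(-\infty,0]$, and $a^{\mathrm{s}}_{11}(0)>\tfrac{11}{102}>\tfrac1{12}$ gives $12a^{\mathrm{s}}_{11}(0)-1>0$, so $\mu_0=\dfrac{24a^{\mathrm{s}}_{11}(0)}{12a^{\mathrm{s}}_{11}(0)-1}>0$ and $\mu_2^{(0)}=\dfrac1{a^{\mathrm{a_{12}}}(0)}>0$. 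Then I would note the reflection identity
\[
\mu_2^{-}(-t)=\frac{24}{12-t}-\mu_0=-\Bigl(\frac{24}{t-12}+\mu_0\Bigr)=-\mu_2^{+}(t),\qquad t\neq 12,
\]
equivalently $\mu_2^{+}(-t)=-\mu_2^{-}(t)$, so the central reflection $\iota(\mu_1,\mu_2):=(-\mu_1,-\mu_2)$ carries the graph of $\mu_2^{+}$ onto that of $\mu_2^{-}$; on branches it exchanges $\tau_0^{+}\leftrightarrow\tau_1^{-}$ and $\tau_1^{+}\leftrightarrow\tau_0^{-}$.

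\textbf{Proof of (i).} For the $\mathcal A$-inclusions I would compute
\[
\mu_2^{+}(\mu_1)-\mu_2^{-}(\mu_1)=24\Bigl(\frac1{\mu_1-12}-\frac1{\mu_1+12}\Bigr)+2\mu_0=\frac{576}{\mu_1^{2}-144}+2\mu_0,
\]
which is $>0$ whenever $|\mu_1|>12$ (the first term is positive, the second by the preliminaries). Hence if $(\mu_1,\mu_2)\in\mathcal A^{+}_2$ then $\mu_1>12>-12$ and $\mu_2>\mu_2^{+}(\mu_1)>\mu_2^{-}(\mu_1)$, so $(\mu_1,\mu_2)\in\mathcal A^{-}_0$; symmetrically, if $(\mu_1,\mu_2)\in\mathcal A^{-}_2$ then $\mu_1<-12<12$ and $\mu_2<\mu_2^{-}(\mu_1)<\mu_2^{+}(\mu_1)$, so $(\mu_1,\mu_2)\in\mathcal A^{+}_0$. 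For the $\mathcal B$-inclusions, $\mu_2^{(0)}>0$ gives $-\mu_2^{(0)}<\mu_2^{(0)}$, so $\{\mu_2>\mu_2^{(0)}\}\subset\{\mu_2>-\mu_2^{(0)}\}$ and $\{\mu_2<-\mu_2^{(0)}\}\subset\{\mu_2<\mu_2^{(0)}\}$, i.e.\ $\mathcal B^{+}_1\subset\mathcal B^{-}_0$ and $\mathcal B^{-}_1\subset\mathcal B^{+}_0$.

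\textbf{Proof of (ii).} For the $\mathcal B$ sets this is immediate from the definitions: $-\mathcal B^{+}_0=\{-\mu_2:\mu_2<\mu_2^{(0)}\}=\{\nu:\nu>-\mu_2^{(0)}\}=\mathcal B^{-}_0$, and likewise $-\mathcal B^{+}_1=\mathcal B^{-}_1$. For the $\mathcal A$ sets I would use $\iota$ together with $\mu_2^{+}(-\mu_1)=-\mu_2^{-}(\mu_1)$: the conditions $\mu_1>-12$, $\mu_2>\mu_2^{-}(\mu_1)$ defining $\mathcal A^{-}_0$ are equivalent to $-\mu_1<12$, $-\mu_2<-\mu_2^{-}(\mu_1)=\mu_2^{+}(-\mu_1)$, i.e.\ to $\iota(\mu_1,\mu_2)\in\mathcal A^{+}_0$; hence $\mathcal A^{-}_0=-\mathcal A^{+}_0$, and the identical manipulation gives $\mathcal A^{-}_2=-\mathcal A^{+}_2$. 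Finally, by Lemma~\ref{lem:regions_A_pm} the sets $\mathcal A^{-}_0,\mathcal A^{-}_1,\mathcal A^{-}_2$ together with $\tau_0^{-}\cup\tau_1^{-}$ partition $\mathbb R^2$, and similarly in the $+$ case; since $\iota$ is an involutive bijection sending $\tau_0^{-}\cup\tau_1^{-}$ to $\tau_0^{+}\cup\tau_1^{+}$ and $\mathcal A^{-}_0,\mathcal A^{-}_2$ to $\mathcal A^{+}_0,\mathcal A^{+}_2$, it must send $\mathcal A^{-}_1$ onto $\mathcal A^{+}_1$, giving $\mathcal A^{-}_1=-\mathcal A^{+}_1$.

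\textbf{Expected obstacle.} There is no substantial difficulty here: the only care needed is in tracking strict versus non-strict inequalities, in handling the excluded vertical lines $\mu_1=\pm12$ that are adjoined to $\mathcal A^{\mp}_1$, and in invoking cleanly from Lemma~\ref{lem:regions_A_pm} that the three $\mathcal A$-regions exhaust $\mathbb R^2\setminus(\tau_0^{\mp}\cup\tau_1^{\mp})$, so that the complementation step for $\mathcal A_1$ in part (ii) is legitimate.
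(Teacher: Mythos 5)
Your proof is correct and follows essentially the same route as the paper: both arguments reduce everything to the positivity of $\mu_0$ and $\mu_2^{(0)}$ (via $a^{\mathrm{s}}_{11}(0)>\tfrac{11}{102}$ and the positivity of $a^{\mathrm{a_{12}}}(0)$) and then verify the inclusions and reflections directly from the defining inequalities. You actually supply two details the paper leaves implicit or garbles: for (i) the paper asserts without computation that $\mu_2>\tfrac{24}{\mu_1-12}+\mu_0$ with $\mu_1>12$ forces $\mu_2>\tfrac{24}{\mu_1+12}-\mu_0$, which is exactly your identity $\mu_2^{+}(\mu_1)-\mu_2^{-}(\mu_1)=\tfrac{576}{\mu_1^{2}-144}+2\mu_0>0$ for $|\mu_1|>12$; and for (ii) the paper's displayed equivalence reads $(\mu_1,\mu_2)\in\mathcal{A}^{\mp}_{\alpha}\Leftrightarrow(-\mu_1,-\mu_2)\in\mathcal{A}^{\mp}_{\alpha}$, which as written asserts central symmetry of each single region (false); your reflection identity $\mu_2^{-}(-t)=-\mu_2^{+}(t)$ together with the complementation argument for $\mathcal{A}^{\mp}_{1}$ is the correct and complete version of what was intended. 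No gaps.
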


\begin{proof}
(i)  
The inclusions follow directly from the definitions of the regions 
$\mathcal{A}^{\mp}_{\alpha}$ and $\mathcal{B}^{\mp}_{\beta}$ 
(see Lemmas \ref{lem:regions_A_pm} and \ref{lem:regions_B} 
), together with the monotonicity of the defining 
functions.  
For instance, if $(\mu_1,\mu_2)\in\mathcal{A}^{+}_{2}$, then
\begin{displaymath}
\mu_1>12, 
\qquad 
\mu_2>\frac{24}{\mu_1-12}+\mu_0.
\end{displaymath}
It follows immediately that
\begin{displaymath}
\mu_1>-12, 
\qquad 
\mu_2>\frac{24}{\mu_1+12}-\mu_0,
\end{displaymath}
and hence $(\mu_1,\mu_2)\in\mathcal{A}^{-}_{0}$.  
The remaining inclusions are verified analogously.

(ii)  
The symmetry follows from the definitions (see Lemma \ref{lem:regions_A_pm}):
\begin{displaymath}
(\mu_1,\mu_2)\in\mathcal{A}^{+}_{\alpha}
\quad\Longleftrightarrow\quad
(-\mu_1,-\mu_2)\in\mathcal{A}^{-}_{\alpha},
\end{displaymath}
\begin{displaymath}
\mu_2\in\mathcal{B}^{+}_{\beta}
\quad\Longleftrightarrow\quad
-\mu_2\in\mathcal{B}^{-}_{\beta}.
\end{displaymath}
This completes the proof.
\end{proof}

We introduce the intersection regions
\begin{displaymath}
\mathcal{G}_{\alpha\beta}
 := \mathbb{A}^{-}_{\alpha}\cap \mathbb{A}^{+}_{\beta},
 \qquad \alpha,\beta\in\{0,1,2,3\}.
\end{displaymath}

\begin{lemma}\label{regions_G}
The regions $\mathcal{G}_{\alpha\beta}$ satisfy the following properties:
\begin{itemize}

\item[(i)]
The intersections with indices $(2,0)$ and $(0,2)$ reduce to
\begin{displaymath}
\mathcal{G}_{20}
 = \mathbb{A}^{-}_2\cap\mathbb{A}^{+}_0
 = \mathbb{A}^{-}_2,
\qquad
\mathcal{G}_{02}
 = \mathbb{A}^{-}_0\cap\mathbb{A}^{+}_2
 = \mathbb{A}^{+}_2.
\end{displaymath}

\item[(ii)]
Similarly,
\begin{displaymath}
\mathcal{G}_{30}
 = \mathbb{A}^{-}_3\cap\mathbb{A}^{+}_0
 = \mathbb{A}^{-}_3,
\qquad
\mathcal{G}_{03}
 = \mathbb{A}^{-}_0\cap\mathbb{A}^{+}_3
 = \mathbb{A}^{+}_3.
\end{displaymath}

\item[(iii)]
If $\alpha+\beta>3$ or $(\alpha,\beta)\in\{(2,1),(1,2)\}$, then
\begin{displaymath}
\mathcal{G}_{\alpha\beta}=\varnothing.
\end{displaymath}

\item[(iv)]
If $\alpha+\beta<3$ or $(\alpha,\beta)\in\{(3,0),(0,3)\}$, then
\begin{displaymath}
(\mu_1,\mu_2)\in\mathcal{G}_{\alpha\beta}
\quad\Longleftrightarrow\quad
(-\mu_1,-\mu_2)\in\mathcal{G}_{\beta\alpha}.
\end{displaymath}
Thus the regions $\mathcal{G}_{\alpha\beta}$ and $\mathcal{G}_{\beta\alpha}$ 
are symmetric with respect to the origin in the $(\mu_1,\mu_2)$-plane.
\end{itemize}
\end{lemma}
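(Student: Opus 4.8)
The plan is to establish all four items of Lemma~\ref{regions_G} by translating the set-theoretic identities into elementary inclusions among the already-understood regions $\mathcal{D}^{\mp}_{\zeta}$, exploiting the inclusions and symmetries recorded in Lemma~\ref{regions} together with the explicit descriptions of $\mathcal{A}^{\mp}_{\alpha}$ in Lemma~\ref{lem:regions_A_pm} and of $\mathcal{B}^{\mp}_{\beta}$ in Lemma~\ref{lem:regions_B}. Recall $\mathcal{D}^{\mp}_{0}=\mathcal{A}^{\mp}_{0}$, $\mathcal{D}^{\mp}_{2}=\mathcal{A}^{\mp}_{2}$, $\mathcal{D}^{\mp}_{3}=\mathcal{A}^{\mp}_{1}\cap\{(0,\mu_2):\mu_2\in\mathcal{B}^{\mp}_{1}\}$ and $\mathcal{D}^{\mp}_{1}=\mathcal{A}^{\mp}_{1}\setminus\mathcal{D}^{\mp}_{3}$, so that $\{\mathcal{D}^{\mp}_{0},\mathcal{D}^{\mp}_{1},\mathcal{D}^{\mp}_{3}\cup(\mathcal{D}^{\mp}_{1}\setminus\mathcal{D}^{\mp}_{1})\}$—more precisely $\{\mathcal{D}^{\mp}_{0},\mathcal{D}^{\mp}_{1},\mathcal{D}^{\mp}_{3}\}$ restricted to the line $\mu_1=0$ and $\{\mathcal{D}^{\mp}_0,\mathcal{D}^{\mp}_1\}$ off it—exhaust $\mathbb{R}^2$ and $\{\mathcal{D}^{\mp}_0,\mathcal{D}^{\mp}_1,\mathcal{D}^{\mp}_2\}$ is a partition of the plane.

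\textbf{Item (i).} I would prove $\mathcal{G}_{20}=\mathcal{D}^{-}_2$ by showing $\mathcal{D}^{-}_2\subset\mathcal{D}^{+}_0$. Since $\mathcal{D}^{-}_2=\mathcal{A}^{-}_2$ and $\mathcal{D}^{+}_0=\mathcal{A}^{+}_0$, this is precisely the inclusion $\mathcal{A}^{-}_2\subset\mathcal{A}^{+}_0$ from Lemma~\ref{regions}(i); hence $\mathcal{D}^{-}_2\cap\mathcal{D}^{+}_0=\mathcal{D}^{-}_2$. The identity $\mathcal{G}_{02}=\mathcal{D}^{+}_2$ follows symmetrically from $\mathcal{A}^{+}_2\subset\mathcal{A}^{-}_0$.

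\textbf{Item (ii).} For $\mathcal{G}_{30}=\mathcal{D}^{-}_3$ I must show $\mathcal{D}^{-}_3\subset\mathcal{D}^{+}_0=\mathcal{A}^{+}_0$. A point of $\mathcal{D}^{-}_3$ has the form $(0,\mu_2)$ with $\mu_2\in\mathcal{B}^{-}_1$ and $(0,\mu_2)\in\mathcal{A}^{-}_1$. By Lemma~\ref{regions}(i), $\mathcal{B}^{-}_1\subset\mathcal{B}^{+}_0$, which is not directly needed; what is needed is that $(0,\mu_2)\in\mathcal{A}^{+}_0$, i.e.\ $\mu_2<\mu_2^{+}(0)=-2+\mu_0$ with $0<12$, which holds automatically whenever $(0,\mu_2)\in\mathcal{A}^{-}_1$ on the segment in question—so I would verify this by substituting $\mu_1=0$ into the defining inequalities of $\mathcal{A}^{-}_1$ and $\mathcal{A}^{+}_0$ from \eqref{def:regions_A_minus}--\eqref{def:regions_A_plus} and checking the chain of inequalities along the line $\mu_1=0$. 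Again $\mathcal{G}_{03}=\mathcal{D}^{+}_3$ follows by the origin-symmetry of item (iv) or by the mirror computation.

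\textbf{Item (iii).} Here I would argue by cases on which region of the $-$ partition and which of the $+$ partition a point can simultaneously lie in. Since $\{\mathcal{D}^{-}_0,\mathcal{D}^{-}_1,\mathcal{D}^{-}_2\cup\mathcal{D}^{-}_3\text{ within }\mathcal{A}^{-}_1\}$—formally $\mathcal{D}^{-}_0\sqcup\mathcal{D}^{-}_1\sqcup\mathcal{D}^{-}_3=\mathbb{R}^2\setminus\mathcal{A}^{-}_2$ split appropriately—and using the inclusions from Lemma~\ref{regions}(i), one shows e.g.\ $\mathcal{D}^{+}_2=\mathcal{A}^{+}_2\subset\mathcal{A}^{-}_0=\mathcal{D}^{-}_0$, so $\mathcal{D}^{+}_2$ is disjoint from $\mathcal{D}^{-}_1,\mathcal{D}^{-}_2,\mathcal{D}^{-}_3$, forcing $\mathcal{G}_{12}=\mathcal{G}_{22}=\mathcal{G}_{32}=\varnothing$; symmetrically $\mathcal{G}_{21}=\mathcal{G}_{23}=\varnothing$ and the trivial $\mathcal{G}_{\alpha\beta}=\varnothing$ for $\alpha,\beta\geq 2$ beyond the allowed pairs, plus $\mathcal{G}_{33}=\varnothing$ because $\mathcal{D}^{-}_3$ and $\mathcal{D}^{+}_3$ live on the line $\mu_1=0$ but in disjoint $\mu_2$-intervals ($\mathcal{B}^{-}_1\cap\mathcal{B}^{+}_1=\varnothing$ since $-\mu_2^{(0)}<\mu_2^{(0)}$). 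Collecting these exactly covers all pairs with $\alpha+\beta>3$ together with $(2,1)$ and $(1,2)$. \textbf{This case analysis is the main obstacle}: one must be careful that the $\mathcal{D}$-regions are not a clean partition (the line $\mu_1=0$ is special), so I would organize the bookkeeping by first disposing of the generic pairs using only the plane partition $\{\mathcal{D}^{\mp}_0,\mathcal{D}^{\mp}_1,\mathcal{D}^{\mp}_2\}$ and Lemma~\ref{regions}(i), then treating the pairs involving index $3$ separately via the $\mu_2$-interval structure.

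\textbf{Item (iv).} This is immediate from Lemma~\ref{regions}(ii): the maps $\mathcal{A}^{\mp}_\alpha\mapsto\mathcal{A}^{\pm}_\alpha$ and $\mathcal{B}^{\mp}_\beta\mapsto\mathcal{B}^{\pm}_\beta$ are effected by $(\mu_1,\mu_2)\mapsto(-\mu_1,-\mu_2)$, hence so is $\mathcal{D}^{\mp}_\zeta\mapsto\mathcal{D}^{\pm}_\zeta$ (the defining line $\mu_1=0$ and the interval $\mathcal{B}^{\mp}_1$ being themselves origin-symmetric in the relevant sense), and therefore $-\mathcal{G}_{\alpha\beta}=-(\mathcal{D}^{-}_\alpha\cap\mathcal{D}^{+}_\beta)=\mathcal{D}^{+}_\alpha\cap\mathcal{D}^{-}_\beta=\mathcal{G}_{\beta\alpha}$; restricting to the non-empty cases from (iii) gives the stated equivalence. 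I would write this last step in one or two lines.
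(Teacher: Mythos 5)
Your proposal is correct and follows essentially the same route as the paper: items (i) and (iv) from the definitions of $\mathcal{D}^{\mp}_{\zeta}$ together with Lemma~\ref{regions}, item (ii) by an explicit check of the inequalities on the line $\mu_1=0$ (the paper uses $\mu_2<-\mu_2^{(0)}<0<\mu_0-2$ where you use $\mu_2<2-\mu_0$, both ultimately resting on $\mu_0>2$), and item (iii) by the same disjointness case analysis the paper sketches via $\mathcal{G}_{12}=\varnothing$. The only bookkeeping slip is that your explicit list in (iii) omits the pairs $(1,3)$ and $(3,1)$, but these follow at once from item (ii), since $\mathcal{D}^{+}_{3}\subset\mathcal{D}^{-}_{0}$ and $\mathcal{D}^{-}_{3}\subset\mathcal{D}^{+}_{0}$ while $\mathcal{D}^{\mp}_{0}$ is disjoint from $\mathcal{D}^{\mp}_{1}$, exactly as your stated strategy for the index-$3$ pairs prescribes.
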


\begin{proof}
(i)  
The identities follow directly from the definitions of 
$\mathbb{A}^{\mp}_{\alpha}$ together with Lemma \ref{regions},(i).

(ii)  
We verify the claim for $\mathcal{G}_{30}$; the argument for $\mathcal{G}_{03}$ 
is identical.  
By definition, any $(\mu_1,\mu_2)\in\mathbb{A}^{-}_3$ satisfies
\begin{displaymath}
\mu_1=0,
\qquad
\mu_2< -\mu_{2}^{(0)}.
\end{displaymath}
Since $\mu_{0}>2$, such points automatically lie in 
$\mathcal{A}^{+}_0\subset\mathbb{A}^{+}_0$, and therefore
\begin{displaymath}
\mathcal{G}_{30}
 = \mathbb{A}^{-}_3\cap\mathbb{A}^{+}_0
 = \mathbb{A}^{-}_3.
\end{displaymath}

(iii)  
Consider the case $(\alpha,\beta)=(1,2)$.  
Lemma \ref{regions},(i) implies 
$\mathcal{A}^{+}_2\subset\mathcal{A}^{-}_0$.  
Since 
$\mathcal{A}^{-}_0\cap \left(\mathcal{A}^{-}_1\cup\tau^{-}_{1}\right)=\varnothing$,
it follows that $\mathcal{G}_{12}=\varnothing$.  
The remaining cases are treated in the same way.

(iv)  
The proof follows directly from Lemma \ref{regions},(ii) and the symmetry properties of $\tau^{\mp}_{i},\,i=0,1.$
\end{proof}

\begin{corollary}\label{cor_G_ab}
Let $\mu_1,\mu_2\in\mathbb{R}$ and $\alpha,\beta\in\{0,1,2,3\}$ satisfy 
$\alpha+\beta<3$ or $(\alpha,\beta)\in\{(3,0),(0,3)\}$. The operator 
$H_{\mu_1\mu_2}(0)$ has exactly $\alpha$ eigenvalues below and 
$\beta$ eigenvalues above its essential spectrum if and only if $(\mu_1,\mu_2)\in\mathcal{G}_{\alpha\beta}$.
\end{corollary}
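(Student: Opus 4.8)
The plan is to assemble Corollary~\ref{cor_G_ab} directly from the already-established machinery: Corollary~\ref{theo:constant_3}, which counts eigenvalues on each side of the essential spectrum separately, and Lemma~\ref{regions_G}, which records that the planar regions $\mathcal{D}^{-}_{\alpha}$ and $\mathcal{D}^{+}_{\beta}$ overlap in the sets $\mathcal{G}_{\alpha\beta}$ and are nonempty precisely in the admissible range of indices. The essential point is that the decomposition $H_{\mu_1\mu_2}(0)=H^{\mathrm{s}}_{\mu_1\mu_2}(0)\oplus H^{\mathrm{a_{12}}}_{\mu_2}(0)\oplus H^{\mathrm{mix}}_{\mu_2}(0)$ from Corollary~\ref{cor:operator_decomposition} lets one count eigenvalues blockwise, and that the blocks contributing below versus above the essential spectrum are controlled by \emph{independent} data — the curves $\tau^{-}_\zeta$ and the point $-\mu_2^{(0)}$ for the lower side, the curves $\tau^{+}_\zeta$ and the point $\mu_2^{(0)}$ for the upper side.

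First I would fix $(\mu_1,\mu_2)\in\mathcal{G}_{\alpha\beta}=\mathcal{D}^{-}_{\alpha}\cap\mathcal{D}^{+}_{\beta}$. By Corollary~\ref{theo:constant_3} applied with the ``$-$'' regions, $H_{\mu_1\mu_2}(0)$ has exactly $\alpha$ eigenvalues (with multiplicity) strictly below $\sigma_{\mathrm{ess}}(H_{\mu_1\mu_2}(0))$; applying the same corollary with the ``$+$'' regions gives exactly $\beta$ eigenvalues strictly above. Since the essential spectrum is the single band $[\mathcal{E}_{\min}(0),\mathcal{E}_{\max}(0)]=[0,24]$ (Proposition~\ref{prop:ess-spectrum}), ``below'' and ``above'' are disjoint, so these two counts refer to disjoint parts of the discrete spectrum and may simply be reported together: $\alpha$ below and $\beta$ above. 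This is the entire content of the statement, so the proof is essentially a citation of the two previous results combined with the observation that $\mathcal{G}_{\alpha\beta}$ is exactly the locus where the lower count equals $\alpha$ and the upper count equals $\beta$.

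The one genuine thing to check is consistency: one must make sure that the hypothesis ``$\alpha+\beta<3$ or $(\alpha,\beta)\in\{(3,0),(0,3)\}$'' is exactly the condition under which $\mathcal{G}_{\alpha\beta}\neq\varnothing$, so that the corollary is not vacuous and covers all realizable configurations. This is precisely Lemma~\ref{regions_G}(iii), which shows $\mathcal{G}_{\alpha\beta}=\varnothing$ for $\alpha+\beta>3$ and for $(\alpha,\beta)\in\{(2,1),(1,2)\}$, together with parts (i)--(ii), which exhibit $\mathcal{G}_{30}=\mathcal{D}^{-}_3$, $\mathcal{G}_{03}=\mathcal{D}^{+}_3$, $\mathcal{G}_{20}=\mathcal{D}^{-}_2$, $\mathcal{G}_{02}=\mathcal{D}^{+}_2$ as nonempty, and with the observation that the remaining small-index regions $\mathcal{G}_{00},\mathcal{G}_{10},\mathcal{G}_{01},\mathcal{G}_{11}$ are nonempty as well (e.g.\ they contain a neighbourhood of the origin or suitable half-lines). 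I would phrase the proof so that the count itself follows immediately, and only add a sentence pointing to Lemma~\ref{regions_G} to justify that the listed index pairs are exactly the nonvacuous ones.

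I do not anticipate a real obstacle here — the corollary is a bookkeeping consequence of Theorems~\ref{theo:constant_1}--\ref{theo:constant_2} (via Corollary~\ref{theo:constant_3}) and the geometric Lemma~\ref{regions_G}. The mild subtlety, if any, is purely notational: making sure that the indices $\alpha,\beta$ in $\mathcal{G}_{\alpha\beta}$ are aligned with the ``below'' and ``above'' labels consistently (i.e.\ the first subscript refers to $\mathcal{D}^{-}$ and hence to eigenvalues below the band, the second to $\mathcal{D}^{+}$ and eigenvalues above), and invoking Proposition~\ref{prop:ess-spectrum} so that the two eigenvalue families are genuinely disjoint. With that in place the argument is two lines.
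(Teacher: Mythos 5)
Your proposal is correct and follows exactly the route the paper intends: the corollary is stated without a separate proof precisely because it is the combination of Corollary~\ref{theo:constant_3} applied once with the ``$-$'' regions and once with the ``$+$'' regions, together with the definition $\mathcal{G}_{\alpha\beta}=\mathcal{D}^{-}_{\alpha}\cap\mathcal{D}^{+}_{\beta}$ and the nonemptiness bookkeeping of Lemma~\ref{regions_G}. Your added remarks on disjointness of the two eigenvalue families and on which index pairs are nonvacuous are consistent with the paper and do not change the argument.
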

 
\noindent
\noindent
Corollary \ref{cor_G_ab} gives a complete classification of the discrete 
spectrum of $H_{\mu_1\mu_2}(0)$: for each parameter region 
$\mathcal{G}_{\alpha\beta}$, the operator has precisely $\alpha$ eigenvalues 
below and $\beta$ eigenvalues above its essential spectrum.  
This result provides a detailed description of how the location and number of 
discrete eigenvalues depend on the coupling constants $(\mu_1,\mu_2)$.  

In the next theorem, we extend this analysis to the family of lattice 
Schr\"{o}dinger operators $H_{\mu_1\mu_2}(K)$ with arbitrary total quasi-momentum 
$K\in\mathbb{T}^3$, establishing lower bounds on the number of eigenvalues 
lying below and above the essential spectrum for all $K$.

\begin{theorem}\label{teo:bound_eig}
Let $\mu_1, \mu_2 \in \mathbb{R}$ and $\alpha, \beta \in \{0,1,2,3\}$ satisfy  
$\alpha + \beta < 3$ or $(\alpha, \beta) \in \{(3,0),(0,3)\}$.  
If $(\mu_1, \mu_2) \in \mathcal{G}_{\alpha\beta}$, then for every 
$K \in \mathbb{T}^3$ the operator $H_{\mu_1\mu_2}(K)$ possesses 
\emph{at least} $\alpha$ eigenvalues below and \emph{at least} $\beta$ eigenvalues 
above its essential spectrum.
\end{theorem}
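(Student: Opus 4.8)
The plan is to deduce the general-$K$ lower bound from the already established exact count at $K=0$ by a monotonicity-in-$K$ argument based on the variational principle. First I would recall that, by Corollary~\ref{cor_G_ab}, if $(\mu_1,\mu_2)\in\mathcal{G}_{\alpha\beta}$ then $H_{\mu_1\mu_2}(0)$ has exactly $\alpha$ eigenvalues strictly below $\mathcal{E}_{\min}(0)=0$ and exactly $\beta$ eigenvalues strictly above $\mathcal{E}_{\max}(0)=24$. The key observation is a pointwise comparison of the dispersion functions: for every $p\in\mathbb{T}^3$ and every $K\in\mathbb{T}^3$,
\begin{equation*}
\mathcal{E}_K(p)=4\sum_{i=1}^3\Bigl(1-\cos\tfrac{K_i}{2}\cos p_i\Bigr)\ge 4\sum_{i=1}^3(1-\cos p_i)=\mathcal{E}_0(p),
\end{equation*}
since $0\le\cos\tfrac{K_i}{2}\le 1$; moreover $\mathcal{E}_K(p)\le 4\sum_i(1+\cos p_i)$ but one should instead note the symmetric bound $24-\mathcal{E}_K(p)=4\sum_i(1+\cos\tfrac{K_i}{2}\cos p_i)-\ldots$ — more precisely, writing $g(p):=\mathcal{E}_0(p)$ one has $\mathcal{E}_0(p)\le\mathcal{E}_K(p)$ and $\mathcal{E}_K(p)-24 \le \mathcal{E}_0(p)-24$ fails in general, so for the upper spectrum I would apply the same inequality to the reflected operator. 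Concretely, $H_0(K)$ and $24\,I-H_0(K)$ are related by the unitary $(\mathcal{T}f)(p)=f(p+\pi\mathbf{1})$, under which $\mathcal{E}_K(p)\mapsto 4\sum_i(1+\cos\tfrac{K_i}{2}\cos p_i)$, and one checks $4\sum_i(1+\cos\tfrac{K_i}{2}\cos p_i)\le 4\sum_i(1+\cos p_i)$, i.e. $24-\mathcal{E}_K(p)\le \text{(reflected }\mathcal{E}_0)$, giving the matching inequality at the top.

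Next I would upgrade this to an operator comparison using the rank-four structure. The interaction $V_{\mu_1\mu_2}(K)$ given by \eqref{eq:two_boson_V} is in fact \emph{independent of $K$}; denote it simply $V_{\mu_1\mu_2}$. Hence $H_{\mu_1\mu_2}(K)=H_0(K)+V_{\mu_1\mu_2}$, and since $H_0(K)\ge H_0(0)$ as multiplication operators (by the pointwise inequality above), we get $H_{\mu_1\mu_2}(K)\ge H_{\mu_1\mu_2}(0)$ in the form sense. For the count of eigenvalues \emph{below} the essential spectrum I would invoke the min–max principle in the following shape: the number of eigenvalues of a bounded self-adjoint operator $A$ strictly below $\lambda_0:=\inf\sigma_{\mathrm{ess}}(A)$ equals $\sup\{\dim L: \langle Af,f\rangle<\lambda_0\|f\|^2\ \forall f\in L\setminus\{0\}\}$. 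Take $L$ a $\alpha$-dimensional subspace on which $\langle H_{\mu_1\mu_2}(0)f,f\rangle < 0\cdot\|f\|^2$ (such $L$ exists by the exact count at $K=0$, with a strict gap below $0$ since there are only finitely many eigenvalues and they are isolated). Since $\langle H_{\mu_1\mu_2}(K)f,f\rangle \ge \langle H_{\mu_1\mu_2}(0)f,f\rangle$ for all $f$, the quadratic form of $H_{\mu_1\mu_2}(K)$ is also negative on $L$; but $0 = \mathcal{E}_{\min}(0) \le \mathcal{E}_{\min}(K)=\inf\sigma_{\mathrm{ess}}(H_{\mu_1\mu_2}(K))$, so $L$ witnesses at least $\alpha$ eigenvalues of $H_{\mu_1\mu_2}(K)$ strictly below its essential spectrum. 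The argument for the $\beta$ eigenvalues above is symmetric: apply the same reasoning to $-H_{\mu_1\mu_2}(K)$, using the upper pointwise inequality and $\mathcal{E}_{\max}(K)\le 24 = \mathcal{E}_{\max}(0)$, so that a $\beta$-dimensional subspace on which $H_{\mu_1\mu_2}(0)$ has form $>24$ still has form $\ge 24 \ge \mathcal{E}_{\max}(K)$ under $H_{\mu_1\mu_2}(K)$ — wait, this needs $\langle H(K)f,f\rangle \ge 24$, which follows from $\langle H(K)f,f\rangle \ge \langle H(0)f,f\rangle > 24$ only if the form inequality goes the right way; it does, since $H_0(K)\ge H_0(0)$ unconditionally, hence $\langle H(K)f,f\rangle\ge\langle H(0)f,f\rangle>24\ge\mathcal{E}_{\max}(K)$, confirming at least $\beta$ eigenvalues above.

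I expect the main obstacle to be the \emph{upper-spectrum} half, because the naive inequality $\mathcal{E}_K(p)\ge\mathcal{E}_0(p)$ pushes the spectrum \emph{up}, which helps for eigenvalues below but seems to work against eigenvalues above — after all, $\mathcal{E}_{\max}(K)\le\mathcal{E}_{\max}(0)$ and the form inequality $H(K)\ge H(0)$ are "pointing the same direction," so one must check carefully that they are in fact compatible: a subspace where $\langle H(0)f,f\rangle>24$ automatically has $\langle H(K)f,f\rangle>24\ge\mathcal{E}_{\max}(K)$, which is exactly what is needed. The subtlety is purely that the \emph{same} inequality $H_0(K)\ge H_0(0)$ serves both endpoints because at the top endpoint the essential spectrum also moves down; once this is spelled out, both halves follow from one form comparison plus the min–max characterisation. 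A secondary point requiring care is the strictness: since $H_{\mu_1\mu_2}(0)$ has only finitely many discrete eigenvalues (Corollary~\ref{cor:location-discrete}), one may choose the test subspace $L$ so that the form is bounded away from the threshold, which is what allows the non-strict form inequality $H(K)\ge H(0)$ to be upgraded back to a strict comparison with the (possibly shifted) threshold of $H_{\mu_1\mu_2}(K)$. Assembling these pieces yields the asserted lower bounds $\alpha$ and $\beta$, uniformly in $K\in\mathbb{T}^3$.
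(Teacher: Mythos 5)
Your overall strategy --- min--max together with a comparison between $H_{\mu_1\mu_2}(K)$ and $H_{\mu_1\mu_2}(0)$, exploiting that $V_{\mu_1\mu_2}$ does not depend on $K$ --- is the same as the paper's, but the specific comparison you rely on is false, and the way you apply it contains a direction error. The pointwise inequality $\mathcal{E}_K(p)\ge\mathcal{E}_0(p)$ is equivalent to $\cos p_i\,\bigl(1-\cos\tfrac{K_i}{2}\bigr)\ge 0$ for each $i$, which fails whenever $\cos p_i<0$: for example $\mathcal{E}_{(\pi,0,0)}(\pi,0,0)=4<8=\mathcal{E}_0(\pi,0,0)$. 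Hence $H_0(K)\ge H_0(0)$ is not a valid operator inequality, and both halves of your argument lose their foundation. Moreover, even granting $\langle H(K)f,f\rangle\ge\langle H(0)f,f\rangle$, your ``below'' step deduces that the form of $H(K)$ is negative on $L$ from the negativity of the form of $H(0)$ there; but $a\ge b$ and $b<0$ do not give $a<0$, so the inequality points the wrong way precisely for the eigenvalues below the band.

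The comparison that actually holds, and which the paper uses, is taken \emph{relative to the moving band edges}. Since $\mathcal{E}_{\min}(K)=4\sum_i\bigl(1-\cos\tfrac{K_i}{2}\bigr)$ and $\mathcal{E}_{\max}(K)=4\sum_i\bigl(1+\cos\tfrac{K_i}{2}\bigr)$, one has
\[
\mathcal{E}_K(p)-\mathcal{E}_{\min}(K)=4\sum_{i=1}^3\cos\tfrac{K_i}{2}\,(1-\cos p_i)\le \mathcal{E}_0(p)-\mathcal{E}_{\min}(0),
\qquad
\mathcal{E}_{\max}(K)-\mathcal{E}_K(p)\le \mathcal{E}_{\max}(0)-\mathcal{E}_0(p),
\]
for every $p,K\in\mathbb{T}^3$. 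Feeding the first inequality into the min--max principle gives $e_m(K)-\mathcal{E}_{\min}(K)\le e_m(0)-\mathcal{E}_{\min}(0)<0$ for $m\le\alpha$, and the second gives the mirror statement at the top of the band; this is exactly the paper's route, which invokes the monotonicity of $K_i\mapsto e_m(K)-\mathcal{E}_{\min}(K)$ (Lemma~5.1 of \cite{LMA:2023}) to conclude that this difference is maximized at $K=\mathbf{0}$. Your remark about strictness is fine, but the essential missing idea is that the variational levels must be compared with the band edge of the \emph{same} fiber $H_{\mu_1\mu_2}(K)$, not with the fixed thresholds $0$ and $24$ of the fiber at $K=\mathbf{0}$.
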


\section{Proofs of main results}\label{sec:Proofs}

\begin{proof}[Proof of Theorem \ref{theo:constant_1}]
By symmetry, it suffices to consider the ``$-$'' case.

The analyticity of the determinant
\begin{displaymath}
\Delta^{\mathrm{s}}_{\mu_1\mu_2}(z)
\end{displaymath}
in $z$ and $(\mu_1,\mu_2)$ implies that, by Rouch\'{e}'s theorem, the number of its zeros in $(-\infty,0)$ is locally constant.  
Since $\mathcal{A}=\mathcal{A}_{\zeta}^{-}$ is connected, this number is constant throughout $\mathcal{A}$ (see Theorem 3.2 in \cite{LMA:2023}).

It remains to find the number $n_{-}\!\big(H^{\mathrm{s}}_{\mu_1\mu_2}(0)\big)$ of eigenvalues of $H^{\mathrm{s}}_{\mu_1\mu_2}(0)$ below the essential spectrum in each region $\mathcal{A}_{\zeta}^{-}$.

(i) Case $\mathcal{A}_{0}^{-}$.  
By Lemma \ref{lem:regions_A_pm}, $(0,0)\in\mathcal{A}_{0}^{-}$.  
Since $H^{\mathrm{s}}_{\mu_1\mu_2}(0)=H_0(0)$ has no discrete spectrum below $\sigma_{\mathrm{ess}}(H^{\mathrm{s}}_{\mu_1\mu_2}(0))$, constancy gives
\begin{displaymath}
n_{-}\!\big(H^{\mathrm{s}}_{\mu_1\mu_2}(0)\big)=0.
\end{displaymath}

(ii) Case $\mathcal{A}_{1}^{-}$.
Since $(\mu_1,\mu_2)\in\mathcal{A}_{1}^{-}$ and $a^{\mathrm{s}}_{11}(0)>\frac{11}{102}$ (see Proposition \ref{lem:prop_function},(e)), which yields $\mu_0>2$, by the representation of $A^{-}(\mu_1,\mu_2)$ given by \eqref{polynomial_A}, we obtain $A^{-}(\mu_1,\mu_2)<0$. 
By Lemma \ref{lem:polynomial_A_pm},
$$
\lim_{z\to -\infty}\Delta^{\mathrm{s}}_{\mu_1\mu_2}(z)=1, \qquad 
\lim_{z\nearrow 0}\Delta^{\mathrm{s}}_{\mu_1\mu_2}(z)=A^{-}(\mu_1,\mu_2)<0,
$$
so $\Delta^{\mathrm{s}}_{\mu_1\mu_2}(z)$ has at least one zero in $(-\infty,0)$.  
Since $\operatorname{rank}(V_{\mu_1\mu_2}^{\mathrm{s}})\le2$, there are at most two zeros.  
The change of sign at the ends implies an odd number of zeros, hence: $n_{-}(H^{\mathrm{s}}_{\mu_1,\mu_2}(0)) = 1$.

(iii) Case $\mathcal{A}_{2}^{-}$.
Here $\mu_1<-12$ and $\mu_2<\frac{24}{\mu_1+12}-\mu_0<0$. As shown in (ii), we can also show that
\begin{equation}\label{A+}
A^{-}(\mu_1\mu_2)>0.
\end{equation} 
For $\mu_2=0$, by Lemma \ref{lem:polynomial_A_pm}
$$
\lim_{z\to -\infty}\Delta^{\mathrm{s}}_{\mu_1 0}(z)=1
$$
and since $\mu_1<-12$ and $a_{11}^{\mathrm{s}}(0)>\frac{11}{102}$,
$$ 
\lim_{z\nearrow 0}\Delta^{\mathrm{s}}_{\mu_1 0}(z)=1+\mu_1 a^{\mathrm{s}}_{11}(0)<0,
$$
so $\Delta^{\mathrm{s}}_{\mu_1 0}(z)$ has one zero $z_{11}<0$.  
For $\mu_1,\mu_2<0$,
\begin{displaymath}
\Delta^{\mathrm{s}}_{\mu_1\mu_2}(z_{11})
=-\mu_1\mu_2 a^{\mathrm{s}}_{12}(z_{11})^2<0,
\end{displaymath}
and by Lemma \ref{lem:polynomial_A_pm} and the inequality \eqref{A+},
\begin{displaymath}
\lim_{z\to -\infty}\Delta^{\mathrm{s}}_{\mu_1\mu_2}(z)=1, \qquad 
\lim_{z\nearrow 0}\Delta^{\mathrm{s}}_{\mu_1\mu_2}(z)=A^{-}(\mu_1,\mu_2)>0.
\end{displaymath}
Thus $\Delta^{\mathrm{s}}_{\mu_1\mu_2}(z)$ has two zeros $z_{21}<z_{11}<z_{22}<0$, and hence
\begin{displaymath}
n_{-}\!\big(H^{\mathrm{s}}_{\mu_1\mu_2}(0)\big)=2.
\end{displaymath}
Assume that $z:=z(\mu_{1},\mu_{2}) \in \mathbb{R}\setminus
\sigma_{\mathrm{ess}}(H_{\mu_{1}\mu_{2}}(0))$ is an eigenvalue of 
$H^{\mathrm{s}}_{\mu_{1}\mu_{2}}(0)$.
Then the eigenvalue equation
\begin{displaymath}
H^{\mathrm{s}}_{\mu_{1}\mu_{2}}(0)\, 
f^{\mathrm{s}}_{\mu_{1}\mu_{2}}
= z
\, f^{\mathrm{s}}_{\mu_{1}\mu_{2}}
\end{displaymath}
together with the existence of the resolvent 
$R_{0}(0,z):=(H_{0}(0)-zI)^{-1}$
implies
\begin{displaymath}
f^{\mathrm{s}}_{\mu_{1}\mu_{2}}
= -\, R_{0}(0,z)\,
V^{\mathrm{s}}_{\mu_{1}\mu_{2}}
\, f^{\mathrm{s}}_{\mu_{1}\mu_{2}}.
\end{displaymath}

Since the corresponding determinant satisfies
$\Delta^{\mathrm{s}}_{\mu_{1}\mu_{2}}(z)=0$,
the latter identity yields the explicit representation of the eigenfunction
$f^{\mathrm{s}}_{\mu_{1}\mu_{2}}$ given by \eqref{rep_eigenfunction}.
\end{proof}

\begin{proof}[Proof of Theorem \ref{theo:constant_2}] The proof is similar to the proof of Theorem \ref{theo:constant_1}.
\end{proof}
 \begin{proof}[Proof of Theorem \ref{theo:resonance}]
We prove the theorem for $i=0$, the case $i=1$ is analogous. 

By Theorem \ref{theo:constant_1}, the value of  
$n_{-}(H^{\mathrm{s}}_{\mu_1\mu_2}(0))$ is constant on each open region  
$\mathcal{A}^{-}_{\zeta}$ and equals $\zeta$.
If $(\mu_1^0,\mu_2^0)\in\tau_0^{-}$, then  
$\tau_0^{-}=\partial\mathcal{A}_0^{-}\cap\partial\mathcal{A}_1^{-}$.  
Therefore, the limits taken from $\mathcal{A}_0^{-}$ and $\mathcal{A}_1^{-}$ 
are $0$ and $1$, respectively. Since $n_{-}(H^{\mathrm{s}}_{\mu_1\mu_2}(0))$ is continuous as $(\mu_1,\mu_2)$ approaches any point on $\tau^{-}_0$ from within $\mathcal{A}^{-}_0$,  thus $n_{-}(H^{\mathrm{s}}_{\mu_1\mu_2}(0))=0$. 

We now show that if $(\mu_1^0,\mu_2^0)\in \tau^{-}_0$, then the equation  
\begin{displaymath}
H^{\mathrm{s}}_{\mu_1^0\mu_2^0}(0)f^{\mathrm{s}}_{\mu_1^0\mu_2^0}=0
\end{displaymath}
admits a non-trivial solution  
$f\in L^{1,\mathrm{e,s}}(\mathbb{T}^3)\setminus L^{2,\mathrm{e,s}}(\mathbb{T}^3)$; 
the other cases are handled analogously.

Let $(\mu_1^{0},\mu_2^{0})\in\tau^{-}_{0}$.  
By Lemma \ref{lem:polynomial_A_pm},(ii) and the definition of $\tau^{-}_{0}$  
(cf. Lemma \ref{lem:regions_A_pm}), we have
\begin{displaymath}
\Delta^{\mathrm{s}}_{\mu_1^0\mu_2^0}(0)
:=\lim_{z\to 0}\Delta^{\mathrm{s}}_{\mu_1^0\mu_2^0}(z)
=0.
\end{displaymath}
Since the Fredholm determinant extends continuously to $z=0$, 
Lemma \ref{lem:LS_and_det} also applies at $z=0$.  
Hence the equation $H^{\mathrm{s}}_{\mu_1^0\mu_2^0}(0)f^{\mathrm{s}}_{\mu_1^0\mu_2^0}=0$ has a non-trivial 
solution, obtained as the limit of the eigenfunction representation 
\eqref{rep_eigenfunction}.  
This gives
\begin{displaymath}
f^{\mathrm{s}}_{\mu_1^0\mu_2^0}(p)
=C\!\left(
\mu_1^{0}a_{12}^{\mathrm{s}}(0)\frac{\alpha_1^{\mathrm{s}}(p)}{\mathcal{E}_{0}(p)}
-
\mu_2^{0}\bigl(1+\mu_1^{0}a_{11}^{\mathrm{s}}(0)\bigr)\frac{\alpha^{\mathrm{s}}_{2}(p)}{\mathcal{E}_{0}(p)}
\right),
\qquad C\in\mathbb{R}\setminus\{0\}.
\end{displaymath}

Since $\alpha_{i}^{\mathrm{s}}(p)\sim 1$ and $\mathcal{E}_{0}(p)\sim|p|^{2}$ 
as $p\to 0$, we obtain $f^{\mathrm{s}}_{\mu_1^0\mu_2^0}\in L^{1,\mathrm{e,s}}(\mathbb{T}^3)$.  
On the other hand, because 
$(\alpha_{i}^{\mathrm{s}}(p))^{2}\sim 1$ whereas  
$\mathcal{E}_{0}(p)\sim |p|^{4}$, we also have  
$f^{\mathrm{s}}_{\mu_1^0\mu_2^0}\notin L^{2,\mathrm{e,s}}(\mathbb{T}^{3})$.  
Thus $\mathcal{E}_{\min}(0)=0$ is a threshold resonance of  
$H^{\mathrm{s}}_{\mu_1^0\mu_2^0}(0)$.
\end{proof}
\begin{proof}[Proof of Theorem \ref{teo:bound_eig}] Without loss of generality, we assume that $(\mu_1,\mu_2)\in\mathcal{G}_{30}$. By Corollary \ref{cor_G_ab} the fiber operator
$H_{\mu_1\mu_2}(0)$ has exactly three eigenvalues strictly below the bottom of the
essential spectrum. Denote these eigenvalues by
\begin{displaymath}
e_1(0;\mu_1,\mu_2)<e_2(0;\mu_1,\mu_2)<e_3(0;\mu_1,\mu_2)<\mathcal{E}_{\min}(0).
\end{displaymath}

For each quasi-momentum $K\in\mathbb{T}^3$ define the $m$-th variational level
\begin{displaymath}
e_m(K;\mu_1,\mu_2)
=\inf_{\substack{M\subset L^{2,\mathrm{e}}(\mathbb{T}^3)\\ \dim M = m}}
\;\sup_{\substack{\phi\in M\\ \|\phi\|=1}}
\bigl(H_{\mu_1\mu_2}(K)\phi,\phi\bigr),
\qquad m=1,2,3,
\end{displaymath}
so that, by the min--max principle (cf.\ \cite[Thm. XIII.1]{RSimon:IV}), $e_m(0;\mu_1,\mu_2)$
coincides with the $m$-th eigenvalue of $H_{\mu_1\mu_2}(0)$ counted from below.

By the monotonicity property of $ e_m(K;\mu_1,\mu_2)-\mathcal{E}_{\min}(K)$, as established in\\
Lemma 5.1 of \cite{LMA:2023}, for each fixed $m\in\{1,2,3\}$,
the quantity $e_m(K;\mu_1,\mu_2)-\mathcal{E}_{\min}(K)$ attains its maximum over
$\mathbb{T}^3$ at $K=0$. Therefore, for every $K\in\mathbb{T}^3$,
\begin{displaymath}
e_m(K;\mu_1,\mu_2)-\mathcal{E}_{\min}(K)
\le
e_m(0;\mu_1,\mu_2)-\mathcal{E}_{\min}(0)<0,
\end{displaymath}
which implies $e_3(K;\mu_1,\mu_2)<\mathcal{E}_{\min}(K)$.

Hence for each $K\in\mathbb{T}^3$ the operator $H_{\mu_1\mu_2}(K)$ has at least three
eigenvalues below its essential spectrum. This completes the proof for the case
$\mathcal{G}_{30}$.
\end{proof}

\section{Conclusions}
\label{sec:conclusions}

In this work, we performed a comprehensive spectral analysis of the two-particle lattice Hamiltonians $H_{\mu_1\mu_2}(0)$ with on-site and nearest-neighbor interactions. By decomposing the Hilbert space into three orthogonal invariant subspaces, we obtained a complete description of the regions in the $(\mu_1,\mu_2)$-plane and the intervals on the $\mu_2$-axis in which the number of eigenvalues lying below the essential spectrum remains constant, together with the analogous regions in the plane and intervals on $\mu_2$-axis for eigenvalues above the essential spectrum.

For the symmetric subspace, two smooth critical curves were identified. For the other two subspaces, a single critical point on the $\mu_2$-axis was found. 

These results provide a rigorous framework for understanding threshold phenomena, spectral transitions, and bound state formation in two-particle lattice systems, and they lay the groundwork for the study of more complex multi-particle interactions on discrete lattices.

\section*{Acknowledgments}
 The authors acknowledge the the support of this research by the Ministry of Innovative Development of the Republic of Uzbekistan (Grant No. FL-9524115052).
 
We used AI tools only for grammar and style editing.

\end{document}